\documentclass[12pt]{elsarticle}
\usepackage[a4paper, total={7in, 9in}]{geometry}
\usepackage{lineno,hyperref}
\usepackage{color}
\usepackage[table]{xcolor}
\usepackage{amsmath}
\usepackage{amssymb}
\usepackage{amsthm}
\usepackage{mathtools}
\usepackage{csquotes}
\usepackage{nicefrac}
\usepackage{tabularx}
\usepackage{csvsimple}
\usepackage{subcaption}
\usepackage[ruled,linesnumbered]{algorithm2e}
\usepackage{tikz}
\usepackage{pgfplots} 
\usepgfplotslibrary{statistics}
\pgfplotsset{compat=1.18}
\usetikzlibrary{arrows.meta,calc}
\usetikzlibrary{arrows.meta,positioning,calc,patterns,fit,backgrounds}
\usetikzlibrary{patterns}
\usetikzlibrary{decorations.pathreplacing}
\usepackage{longtable}
\usepackage{booktabs}
\usepackage{pdflscape}
\usepackage{graphicx}   
\usepackage{subcaption} 
\usetikzlibrary{arrows.meta, positioning}
\usepackage[T1]{fontenc}
\modulolinenumbers[5]
\RequirePackage{bm}
\makeatletter
\newcommand{\customlabel}[2]{%
\protected@write \@auxout {}{\string \newlabel {#1}{{#2}{\thepage}{#2}{#1}{}} }%
\hypertarget{#1}{#2}
}
\makeatother

\makeatletter
\def\ps@pprintTitle{%
\let\@oddhead\@empty
\let\@evenhead\@empty
\def\@oddfoot{}%
\let\@evenfoot\@oddfoot}
\makeatother

\journal{Discrete Optimization}

\biboptions{authoryear}

\newtheorem{theorem}{Theorem}

\newtheorem{proposition}[theorem]{Proposition}

\newtheorem{lemma}[theorem]{Lemma}

\newcommand{\NN}{\mathbb{N}}

\newcommand{\RR}{\mathbb{R}}

\newcommand{\Fres}{F^{\mathrm{res}}}
\newcommand{\Freb}{F^{\mathrm{reb}}}
\newcommand{\Floss}{F^{\mathrm{loss}}}
\newcommand{\Fmpd}{F^{\mathrm{MPD}}}

\newcommand{\Rloss}{R^{\mathrm{loss}}}
\newcommand{\Rmpd}{R^{\mathrm{MPD}}}

\newcommand{\rhorange}{\rho^{\mathrm{range}}}
\newcommand{\rhocorr}{\rho^{\mathrm{corr}}}
\newcommand{\rhocorract}{\rho^{\mathrm{corr}}}

\definecolor{prepExpTransit}{HTML}{1F77B4}
\definecolor{prepExpSupply}{HTML}{4A8BD6}
\definecolor{prepExpStorage}{HTML}{7FB0FF}
\definecolor{prepSupTransit}{HTML}{C40101}
\definecolor{prepSupSupply}{HTML}{F60000}
\definecolor{prepSupStorage}{HTML}{FC8080}
\definecolor{prepReinTransit}{HTML}{2CA02C}
\definecolor{prepReinSupply}{HTML}{4FBF4F}
\definecolor{prepReinStorage}{HTML}{8FDC8F}
\definecolor{prepRemaining}{HTML}{7F7F7F}
\definecolor{prepExpAgg}{HTML}{001CF3}
\definecolor{prepSupAgg}{HTML}{CD0000}
\definecolor{prepReinAgg}{HTML}{007600}

\newcommand{\swatch}[1]{%
  {\setlength{\fboxsep}{0pt}%
   \fcolorbox{black!35}{#1}{\rule{0pt}{1.4ex}\hspace{1.4ex}}}}

\newcommand{\Halmos}{$\Box$}

\begin{document}

\begin{frontmatter}
    
    \title{A General Multicriteria Optimization Perspective on Resilience}

    \author[kit]{Stephan Helfrich\corref{mycorrespondingauthor}}
    \ead{stephan.helfrich@kit.edu}
    \cortext[mycorrespondingauthor]{Corresponding author}
    
    \author[kit2]{Gabriela Ciolacu}
    \ead{gabriela.ciolacu@kit.edu}

    \author[ku]{Jan Boeckmann}
    \ead{jan.boeckmann@ku.de}
    
    \author[kit,kit2]{Emilia Grass}
    \ead{emilia.grass@kit.edu}
    
    \address[kit]{Karlsruhe Institute of Technology, Operations of Critical Infrastructures, Institute of Information Security and Dependability (KASTEL), Kaiserstraße~89, 76133 Karlsruhe}

    \address[kit2]{Karlsruhe Institute of Technology, Operations of Critical Infrastructures, Helmholtz Information \& Data Science School for Health (HIDSS4Health), Kaiserstraße~89, 76133 Karlsruhe}
    
    \address[ku]{Katholische Universität Eichstätt-Ingolstadt, Wirtschaftswissenschaftliche Fakultät Ingolstadt, Auf der Schanz~49, 85049 Ingolstadt}
    
    \begin{abstract}
        Resilience is a system's capability to prepare for, resist, absorb, and recover from adverse events.
        By definition, resilience therefore encompasses multiple criteria that can conflict and may prescribe different decisions. Importantly, a system must also maintain its effectiveness during routine operations and appropriately scale its preparedness for adverse events. Although resilience is inherently multicriteria, existing models often focus on a single criterion and do not explicitly analyze potential conflicts, the associated trade-offs, and their implications for decision-making.
        
        We formulate rebound, resistance, loss, and maximum performance degradation as separate resilience criteria in a general two-stage multicriteria model of network flows over time that integrates preparedness, effectiveness, and response. We show that, in general, such multicriteria optimization models are intractable and develop an enclosure-based heuristic for approximating the nondominated set. Our computational results show that different preparedness instruments and activation timings are associated with different resilience criteria and that the corresponding trade-offs are strongly instance-specific. Thus, resilience should be approached from an explicit multicriteria perspective rather than through a universal, preference-independent scalar index.
    \end{abstract}
    \begin{keyword}
        Resilience; Flows over time; Network Optimization; Two-Stage Multicriteria Optimization
    \end{keyword}
    
\end{frontmatter}

\section{Introduction}\label{sec:intro}

As natural disasters and intentional attacks increase, enhancing resilience becomes increasingly important~\citep{linkov2019fundamental,IPCC2021}. Resilience is commonly understood as a system's capability to prepare, resist, absorb, and rebound from predictable and unpredictable adverse events, thereby preventing irreversible performance changes~\citep{national_research_council_disaster_2012}.

When designing optimization models to strengthen resilience, the system typically operates at a given performance level under routine operations until an \emph{adverse event} disrupts it. 
Hence, prior to the event, the network can be strengthened \emph{proactively} during routine operations, and resources can be prepared in advance. Once the adverse event starts, parts of the network become unavailable and system performance deteriorates. From this timestep onward, the system can only be adjusted \emph{reactively}, relying on resources established through proactive decisions to enhance resilience.
However, the precise formulation of resilience remains elusive. The formulation often lacks a unified understanding, leading to conceptual ambiguities about what constitutes resilience and how it differs from other related concepts (e.g., preparedness)~\citep{ciolacu2025resilience}. 
Additionally, this problem is particularly challenging as resilience is inherently multifaceted and cannot be captured by a single criterion. Accordingly, this work considers four commonly used resilience criteria, illustrated in Figure~\ref{fig:resilience-curve} and detailed below:

\begin{itemize}
    \item \emph{Rebound: }Upon the start of an adverse event, the system should return to its routine performance quickly.
    \item \emph{Resistance: }Upon the start of an adverse event, the system should keep operating at the routine performance as long as possible.
    \item \emph{Loss: }Upon the start of an adverse event, the cumulative loss of performance over time should be as small as possible.
    \item \emph{Maximum Performance Degradation (MPD): }Upon the start of an adverse event, the difference between the routine performance and the worst performance should be as small as possible.
\end{itemize}
Whereas the literature widely explores individual resilience criteria~\citep{sharkey_search_2020}, extant optimization models and frameworks typically capture only a narrow view of resilience and frequently overlook temporal criteria such as rebound and resistance~\citep{Gao2019}. Consequently, it remains unclear whether the four criteria conflict, how pronounced the associated trade-offs are, and how these conflicts affect preparedness and response decisions. In particular, preparedness resources must be allocated cost-effectively without unduly compromising performance during routine operations, but the implications of different preparedness decisions for the individual resilience criteria remain insufficiently understood. Against this background, we address the following research question: \emph{How can resilience be approached from a multicriteria perspective, and what do conflicts among resilience criteria imply for preparedness and response decisions?}

\begin{figure}
\centering
\begin{tikzpicture}[
    >=Stealth,
    x=1.5cm,
    y=3.4cm,
    axis/.style={thick},
    grid/.style={gray!40,dashed},
    curve/.style={blue!50!black,line width=2.5pt},
    crit/.style={red!60!black,thick,{Stealth[length=5pt]}-{Stealth[length=5pt]}},
    critlab/.style={red!60!black,font=\small\itshape,fill=white,fill opacity=0.85,
                    text opacity=1,inner sep=1.5pt},
    tick/.style={below,font=\scriptsize,align=center,text=black!65},
    dot/.style={circle,fill=black,inner sep=2.2pt},
    scale = 1
]

\def\tzero{-0.35}
\def\te{0.6}
\def\tres{2.0}
\def\td{4.0}
\def\tr{6.2}
\def\tf{7.6}
\def\tend{8.1}

\def\yBase{0.18}
\def\yPeak{1.00}

\fill[blue!12]
  plot[const plot] coordinates {
    (\tzero,\yBase) (\te,\yBase) (\tres,0.36) (2.4,0.52) (2.8,0.62)
    (3.2,0.78) (3.6,0.86) (\td,\yPeak) (4.6,0.90) (5.0,0.74) (5.4,0.58)
    (5.8,0.40) (\tr,\yBase) (\tf,\yBase) (\tend,\yBase)
  } -- (\tend,\yBase) -- (\tzero,\yBase) -- cycle;

\draw[grid] (\tzero,\yPeak) -- (\tend+0.15,\yPeak);
\draw[grid] (\tzero,\yBase) -- (\tend+0.15,\yBase);

\foreach \x in {\tres,\td,\tr}{\draw[grid] (\x,0) -- (\x,1.05);}
\draw[red,thick] (\te,0) -- (\te,1.05);
\draw[red,thick] (\tf,0) -- (\tf,1.05);

\draw[axis,->] (\tzero,0) -- (\tend+0.3,0) node[right] {Time};
\draw[axis,->] (\tzero,0) -- (\tzero,1.34);
\node[rotate=90] at (-1.45,0.62) {Performance $P$};

\node[below,font=\small] at (\tzero,0) {};
\node[below,font=\small] at (\te,0) {};
\node[below,font=\small] at (\tres,0) {};
\node[below,font=\small] at (\td,0) {};
\node[below,font=\small] at (\tr,0) {};
\node[below,font=\small] at (\tf,0) {};

\node[tick,text width=1.6cm] at (\tzero,-0.09) {routine \\operations};

\node[tick,text width=1.6cm] at (\te,-0.09) {start of \\ adverse event};
\node[tick,text width=2.3cm] at (\tres,-0.09) {first change in\\performance};
\node[tick,text width=2.3cm] at (\td,-0.09) {worst performance};
\node[tick,text width=2.3cm] at (\tr,-0.09) {return to \\routine operations};
\node[tick,text width=1.6cm] at (\tf,-0.09) {end of\\horizon};

\draw[curve]
  plot[const plot] coordinates {
    (\tzero,\yBase) (\te,\yBase) (\tres,0.36) (2.4,0.52) (2.8,0.62)
    (3.2,0.78) (3.6,0.86) (\td,\yPeak) (4.6,0.90) (5.0,0.74) (5.4,0.58)
    (5.8,0.40) (\tr,\yBase) (\tf,\yBase) (\tend,\yBase)
  };

\node[dot] at (\te,\yBase) {};
\node[dot] at (\tres,\yBase) {};
\node[dot] at (\td,\yPeak) {};
\node[dot] at (\tr,\yBase) {};


\draw[crit] (\te,1.10) -- (\tres,1.10);
\node[critlab] at ({(\te+\tres)/2},1.10) {Resistance};

\draw[crit] (\te,1.26) -- (\tr,1.26);
\node[critlab] at ({(\te+\tr)/2},1.26) {Rebound};

\draw[crit] (4.3,\yBase) -- (4.3,\yPeak);
\node[critlab,rotate=90] at (4.3,0.59) {MPD};

\node[critlab] at (5.2,0.42) {Loss};

\end{tikzpicture}

    \caption{General performance curve and illustration of the four resilience criteria in the context of an adverse event $E$. Performance is measured as unmet demand and is therefore minimized. Since we assume discrete time step, unmet demand is a step function.}
    \label{fig:resilience-curve}
\end{figure}

To address this, we build on the literature~\citep[e.g.,][]{alderson_assessing_2014} and propose a general two-stage optimization model that enhances resilience from a \emph{multicriteria perspective}. The model (i) accounts for all four resilience criteria and thereby allows potential conflicts and the associated trade-offs to be analyzed, (ii) follows a flow-over-time formulation, and (iii) incorporates proactive and reactive decisions. Including all resilience criteria enables us to assess whether the criteria conflict and, where they do, to quantify the associated trade-offs and their implications for decision-making. In addition, we propose two complementary conditions to balance a system's preparedness and effectiveness before the adverse event with its reaction to it.
Flow-over-time formulation \citep{Skutella2009} extends classical static flow models by explicitly accounting for temporal variation in flows, yielding a more realistic representation of system performance. The formulation can be adapted to a wide variety of systems, such as critical infrastructures~\citep[e.g.,][]{Bertsimas2013,alderson_assessing_2014,sharkey_interdependent_2015,ouyang_mathematical_2017}, and allows modeling of temporal aspects of resilience. Finally, the two-stage structure models proactive and reactive decisions, hence mapping the implications of preparedness and effectiveness on resilience criteria. 

To analyze the resulting multicriteria problem, we exploit the structural properties of the temporal resilience criteria. This structure enables to prove intractability of the multicriteria problem and motivates the development of a heuristic. We further analyze the implications of the resilience criteria and complementary conditions through numerical experiments on benchmark instances. The computational results demonstrate that the resilience criteria can induce materially different preparation and response decisions. On the  instances studied and across the computed sets of solutions, rebound varies more substantially than resistance, loss, and MPD, giving recovery-time targets the strongest discriminatory power among candidate solutions. The results provide evidence of conflicts among the resilience criteria, with the most consequential conflicts concerning temporal resilience. Rebound displays the strongest discriminatory power among candidate solutions, while the conflict between resistance and rebound also depends on when reserve capacities are activated. Furthermore, the magnitude of the associated trade-offs varies substantially across instances. Consequently, there is no universally preferred resilience strategy, and preferences among resilience criteria cannot generally be specified independently of the instance at hand. Temporal modelling is therefore necessary to capture the conflicts that matter most.

These findings also have an important implication for resilience-oriented operations research. A single resilience score may appear objective, but its construction necessarily encodes preferences regarding the underlying criteria. Depending on these preferences, the model can prescribe different preparedness investments and response strategies. Resilience should therefore be approached from an explicit multicriteria perspective rather than represented by a universal scalar index.

The contributions of this work are threefold. First, as a \emph{modelling contribution}, we formulate rebound, resistance, loss, and MPD as separate criteria in a general two-stage model of network flows over time that integrates preparedness, effectiveness, and response decisions. The formulation is deliberately general and can be adapted to a broad range of application contexts. Second, as a \emph{mathematical contribution}, we decompose the resulting problem into a family of bicriteria linear programs and show that the number of efficient temporal resilience profiles can grow superpolynomially in the encoding length, precluding a polynomial-size enumeration of the complete nondominated set in general. Motivated by this, we develop an enclosure-based heuristic for approximating the nondominated set. Third, we derive \emph{managerial insights} from a computational study on benchmark-derived instances. The results indicate that different preparedness instruments and activation timings are associated with different resilience criteria, that temporal resilience is central to the most consequential conflicts, and that the magnitude of the associated trade-offs is strongly instance-specific. Consequently, a universal, preference-independent scalar resilience index is unsuitable for prescribing decisions across different systems.

The outline of this work is as follows. In Section~\ref{sec:literature}, we introduce resilience and the background work. In Section~\ref{sec:problem-formulation}, we introduce the flows over time and provide general definitions of adverse events and adjustments in reaction to these adverse events. Additionally, we introduce the criteria that characterize a system's resilience under adverse events and show how to capture them in the model based on flows over time. In Section~\ref{sec:multiobjective}, we develop the general multicriteria model formulation and the heuristic method in Section~\ref{sec:solution}, followed by evaluation on benchmark instances in Section~\ref{sec:experiments}. The conclusion is presented in Section~\ref{sec:conclusion}.

\section{Related Works}\label{sec:literature}

In this section, we provide a brief overview of various ways to quantify resilience and optimization models for enhancing a system's resilience. We start by introducing the resilience criteria studied in the literature in  Section \ref{sec:literature_resilience}. For a broader understanding of resilience quantification and its interpretations, the reader is directed to \cite{liu2022network,bruckler2024review}.
In Section \ref{sec:literature_optimization}, we survey models that address multiple resilience criteria and include both proactive and reactive decisions.
For more general surveys on the application of optimization models to resilience, 
we refer to~\cite{ouyang_review_2014,gras_prepositioning_2015, ouyang_mathematical_2017,sharkey_search_2020,oveysi_optimization_2021}.

\subsection{Resilience Criteria and Complementary Conditions}\label{sec:literature_resilience}

Resilience describes the system's capability to prepare for, resist, absorb, and react to adverse events that disrupt routine operations. Resilience can be quantified with respect to two aspects: \emph{performance resilience} and \emph{temporal resilience}~\citep{liu2022network}. 

The performance formulation identifies the most critical managerial goal that must be consistently met for the system to be considered functional during both routine and adverse event operations~\citep{poulin2021infrastructure}. Hence, to determine whether the system has experienced significant losses, we compare its performance from the onset of the event with its performance under routine operations; the resulting difference is the \emph{performance gap}. 
For instance, met or unmet demand is often used as a performance metric when evaluating resilience~\citep[e.g.,][]{ni2018modeling,Gao2019}. But they yield different insights into adverse event operations. Unmet demand measures failure to maintain service continuity upon the start of an adverse event, emphasizing shortages, whereas met demand assesses the quantity of services or goods successfully provided.
Similarly, the duration of the period between the start of the adverse event and the return to routine operations also matters for resilience \citep{bruckler2024review}. For instance, a quick recovery limits performance losses and thus contributes positively to resilience.
Building on the existing literature~\citep{ciolacu2025resilience}, a set of resilience criteria for temporal and performance resilience is presented to capture the system's most essential capabilities in the context of adverse events. 

Temporal resilience can be assessed using \emph{Rebound Criterion} and \emph{Resistance Criterion}. \emph{Rebound} indicates the time it takes for a system to return to routine operation performance \citep{ivanov2017supply,Gao2019}. If the system fails to rebound and minimize the performance gap, its resilience is considered low.
Hence, a shorter phase contributes positively to system resilience.\footnote{In supply chain literature, the rebound criterion evaluates the capability of the system to recover \citep[e.g.,][]{bruckler2024review}, the system's time-to-recovery \citep[e.g.,][]{behzadi2020metrics}, and the level of recovery in comparison to a threshold \citep[e.g.,][]{behzadi2020metrics}.}
In contrast, \emph{Resistance} indicates the extent in time to which the system copes with the immediate impact of an adverse event and manages to prevent significant performance degradation~\citep[e.g.,][]{ivanov2017supply}. Resistance measures the phase during which the performance gap is minimal upon the start of an adverse event. Hence, a longer phase contributes positively to system resilience.\footnote{In supply chain literature, the resistance criterion evaluates the system's time-to-survive \citep{ivanov2017supply,bruckler2024review}.}

Performance resilience can be assessed using \emph{Loss Criterion} and \emph{Maximum Performance Degradation (MPD) Criterion}. \emph{Loss} is a well-known performance resilience criterion that evaluates the cumulative performance deficit ~\citep{bruckler2024review}. This criterion indicates whether the system can absorb and adapt to adverse events without irreversible performance degradation after an adverse event begins, compared with routine operations. Hence, a decrease in the system's total performance deficit over time contributes positively to resilience. 
In contrast, \emph{Maximum Performance Degradation} aims to prevent a large drop in performance upon the start of an adverse event~\citep {bruckler2024review}. Hence, a smaller performance gap upon the start of an adverse event contributes positively to resilience. 

Beyond the four resilience criteria, a system should remain effective in pursuing its goals during routine operations and cost-efficient in its preparedness decisions~\citep{dai2026two}. Hence, \emph{Preparedness} and \emph{Effectiveness} are introduced as \emph{complementary conditions} that balance the efforts to enhance resilience with the system's goals under routine operations. The scale of preparedness decisions taken to increase resilience should contribute to system resilience while remaining cost-effective~\citep{lucker_balancing_2025}. Additionally, greater resilience should not reduce performance under routine operations~\citep{lucker_balancing_2025}. 

\subsection{Enhancing Resilience Using Optimization}\label{sec:literature_optimization}
In the optimization literature, resilience is often approached through optimization frameworks that capture both proactive and reactive decisions \citep{sharkey_search_2020}. 
One of the first optimization models that address these decisions in a general setting was introduced in~\cite{brown_defending_2006}. The authors propose a static network flow formulation that maximizes flow while minimizing unused supply and unmet demand. The model assumes an intelligent attacker and implicitly treats the resulting performance loss as a proxy for resilience. 

Building on this work, \cite{alderson_assessing_2014} 
propose a general mathematical framework to improve the resilience of critical infrastructures under adverse events. The authors
model a wide range of critical infrastructures using shortest-path, network-flow, scheduling, or general (non-) linear (mixed-)integer programs. \cite{alderson_assessing_2014} implicitly treats the resulting performance loss as a proxy for resilience. 
Subsequent studies account for interdependencies between critical infrastructures~\citep{ouyang_mathematical_2017,belle_resilience-based_2023} and have been used to identify proactive and reactive decisions across various application domains, such as electric power grids~\citep{alguacil_trilevel_2014}, water distribution systems~\citep{wu_defenderattackeroperator_2021}, transportation and supply chains~\citep{miller-hooks_measuring_2012,ni2018modeling}, humanitarian relief chains ~\citep{taghvaei2025bi} and healthcare systems~\citep{goodarzian2022sustainable,ozcelik2025bi}.
In the context of classical maximum network flow problems, \cite{Hien2020} and \cite{Eshghali2023} assess the network's ability to fortify and restore, respectively, optimal flow following adverse events while considering preparedness. 
Similarly,
\cite{chen_where_2026} study the maximization of expected maximum flows while considering preparedness but interpret resilience as the criticality of vertices and arcs.

Network recovery is modeled through restorative optimization frameworks, such as ~\cite{sharkey_interdependent_2015,gonzalez_interdependent_2016,almoghathawi_exploring_2021,alkhaleel2022risk}. These frameworks combine a flow over time formulation with a scheduling problem. The frameworks identify an optimal sequence of restoration steps, thereby assuming that faster recovery corresponds to greater resilience. Notably, \cite{ghorbani-renani_protection-interdiction-restoration_2020} extend these frameworks by incorporating proactive decisions and balancing the MPD criterion with recovery. 

One of the first optimization models to explicitly account for temporal resilience was proposed by~\cite{goldbeck_resilience_2019}, who developed a flow over time and simulation framework to determine optimal reactions to adverse events for critical infrastructures. 
However, resilience is only evaluated, though not explicitly optimized, in terms of rebound, loss, and MPD criteria. Similarly, \cite{fang_adaptive_2019}, \cite{Gao2019} and \cite{ghorbani-renani_protection-interdiction-restoration_2020} indirectly evaluate the rebound criteria for (interdependent) critical infrastructures and supply chains.
Recently, \cite{helfrich_defender-attacker-defender_2026} propose a flow-based formulation for hospital service planning following a cybersecurity attack. In particular, the objective function explicitly incorporates rebound time, loss, and MPD through a weighted sum.

In summary, the literature enhances resilience either through system performance, primarily using loss and MPD criteria, or through restoration decisions in reaction to adverse events. Nonetheless, temporal criteria remain less represented. Although recent studies seek to integrate both performance and temporal resilience, existing frameworks remain application-specific rather than providing a general formulation of resilience.
Lastly, two-stage optimization \citep{alderson_assessing_2014,gras_prepositioning_2015} has emerged in the literature as a suitable framework for modeling resilience as a multicriteria problem. 
It has therefore been widely applied to resilience enhancement problems, particularly in humanitarian contexts, where resilience must often be discussed and balanced against competing objectives.

\section{Problem Setting and Modeling Framework}\label{sec:problem-formulation}
In this section, we formalize the problem setting underlying the two-stage multicriteria formulation for assessing and improving a system's resilience. We first provide a high-level description of the two-stage decision framework (Section~\ref{sec:decision-framework}). We then recall the definition of flows over time to capture the system's performance under routine operations (Section~\ref{sec:flow-over-time}). Building on this, we formally define adverse events and reactive flows over time that capture the system's operation in reaction to them (Section~\ref{sec:adverse-events-reactive-flows}). 
 
\subsection{Two-Stage Decision Framework}\label{sec:decision-framework}
Improving a system's resilience is driven by proactive decisions before some adverse event occurs combined with reactive decisions after the onset of adverse event. This separation gives rise to a two-stage decision process, whose sequencing across the planning horizon is illustrated in Figure~\ref{fig:temp-sequence}.

\begin{figure}
    \centering
    \begin{tikzpicture}[font=\sffamily, 
            stage/.style={rectangle, draw, thick, rounded corners, inner sep=2pt},
            label/.style={font=\bfseries},
            >=stealth,
            scale=0.7,transform shape
            ]
    
    \node[stage, fill=orange!20, minimum width=6.3cm, minimum height=2.75cm, text width = 6.3cm] (stage1) at (-5,6.75) {};
    \node[anchor=north west,label] at ([xshift=5pt,yshift=-1pt]stage1.north west) {Preparedness Decisions};
    \node[stage, fill=orange!10, minimum width=5.75cm, minimum height=.5cm] (coop)   at ([xshift=0pt,yshift=-25pt]stage1.north)  {Expansive Decisions};
    \node[stage, fill=orange!10, minimum width=5.75cm, minimum height=.5cm] (backup) at ([xshift=0pt,yshift=-45pt]stage1.north)  {Reinforcing Decisions};
    \node[stage, fill=orange!10, minimum width=5.75cm, minimum height=.5cm] (it)     at ([xshift=0pt,yshift=-65pt]stage1.north) {Supporting Decisions};
    
    \node[stage, fill=brown!20, minimum width=10cm, minimum height=1.25cm, text width = 3cm] (nominal) at (3.5,6) {};
    \node[anchor=north west,label] at ([xshift=5pt,yshift=-1pt]nominal.north west) {Routine Operation Decisions};
    \node[stage, fill=brown!10, minimum width=9.75cm, minimum height=.5cm] (nominalflow) at ([xshift=0pt,yshift=-25pt]nominal.north) {Nominal Flow over Time subject to Effectiveness};
    
    \draw[dashed] (-8,5) -- (13,5);
    
    \node[stage, fill=red!20, minimum width=6cm, minimum height=1.25cm] (stage2) at (3,4) {};
    \node[anchor=north west, label] at ([xshift=5pt,yshift=-1pt]stage2.north west) {Adverse Events};
    \node[stage, fill=red!10, minimum width=5.75cm, minimum height=.5cm] (attack) at ([xshift=0pt,yshift=-25pt]stage2.north) {Adverse Event Impacts};
    
    \node[stage, fill=green!20, minimum width=9cm, minimum height=2cm] (stage3) at (4.5,2) {};
    \node[anchor=north west, label] at ([xshift=5pt,yshift=-1pt]stage3.north west) {Reactive Decisions};
    \node[stage, fill=green!10, minimum width=8.75cm, align=center, minimum height=.5cm] (backup3) at ([xshift=0pt,yshift=-25pt]stage3.north) {Reactive Flows over Time};
    \node[stage, fill=green!10, minimum width=8.75cm, minimum height=.5cm] (coop3) at ([xshift=0pt,yshift=-45pt]stage3.north) {Supporting Activation Decisions (implicit)};
    
    \draw[|-|] (-1.5,0.5) -- (9,0.5);
    \node[] at (-1.5,0) {$t = 0$};
    \node[] at (9,0) {$t =T$};
    
    \draw[very thick] decorate [decoration={name=brace,amplitude=10pt}] {(9.25,8) -- (9.25,5.25)};
    \draw[very thick] decorate [decoration={name=brace,amplitude=10pt}] {(9.25,4.75) -- (9.25,1)};
    
    \node[anchor=west] at (10.5,5.5) {\emph{First Stage}};
    \node[anchor=west] at (10,6.625) {$z^1 \in Z^1$};
    \node[anchor=west] at (10.5,4.5) {\emph{Second Stage}};
    \node[anchor=west] at (10,2.75) {$z^2 \in \prod\limits_{E \in \mathcal{E}} Z^{2,E}(z^1)$};
    
    \end{tikzpicture}
    \caption{Schematic representation of the first- and second-stage decisions in the two-stage multicriteria formulation and the sequencing of these decisions across the planning horizon~$\mathcal{T}=\{0,1,\ldots,T\}$.}
    \label{fig:temp-sequence}
\end{figure}
In the first stage, two kinds of decisions are taken jointly.
First, \emph{preparedness decisions} strengthen the system in anticipation of adverse events. Following~\citet{faturechi_mathematical_2014}, we distinguish three complementary classes of preparedness decisions: \emph{expansive} preparedness decisions add redundancy and additional capacity that is available both under routine operations and in reaction to an adverse event (for instance, a permanent capacity expansion of a transportation route or distribution facility), \emph{reinforcing} preparedness decisions reduce the vulnerability of network components, so that a given adverse event impacts them less severely (for instance, the physical hardening of a distribution facility), and \emph{supporting} preparedness decisions establish flexible resources or capabilities that can be activated only once an adverse event has started (for instance, a contracted stand-by supplier). These decisions are subject to the \emph{preparedness condition}: their scale should enhance system resilience while remaining cost-effective.

Second, a \emph{nominal flow over time} is determined that represents the operation of the system under routine operations over a prescribed time horizon. This nominal flow may modify the system's established routine operation in order to improve resilience. Such modifications must, however, not come at the expense of routine performance. The nominal flow is therefore subject to the \emph{effectiveness condition}, which permits deviations from a prescribed \emph{reference operation} of the system only within given tolerances. Together, the preparedness decisions and the nominal flow over time form a feasible first-stage decision~$z^1 \in Z^1$.

Then, an adverse event occurs, described by time-dependent demand surges and shocks to the system's transit, supply, and storage capacities. In the second stage, upon the start of an adverse event, the system is adjusted reactively: the supporting preparedness investments procured in the first stage may be activated, and a \emph{reactive flow over time} that represents the operation of the system in reaction to the adverse event is determined for the remainder of the horizon. For each adverse event~$E$ in a given set of adverse events~$\mathcal{E}$, the activation of the supporting reserves together with the reactive flow over time forms the feasible reactive decision~$z^{2,E} \in Z^{2,E}(z^1)$ that depends on the first-stage decision~$z^1$. Collecting the reactions to all adverse events yields the second-stage decision~$z^2 = (z^{2,E})_{E \in \mathcal{E}} \in \prod_{E \in \mathcal{E}} Z^{2,E}(z^1)$.

The first- and second-stage decisions~$z = (z^1, z^2)$ are jointly evaluated through four resilience criteria. Each criterion is essentially measured through the difference in performance between the nominal flow over time and the reactive flow over time, that is, through the per-time step deviations of the operation under the adverse event, aggregated over~$\mathcal{E}$, from the operation under routine operations.
The \emph{rebound} criterion~$\Freb(z^1,z^2)$ states that the earlier the system attains and returns to routine operations, the higher its resilience.
The \emph{resistance} criterion~$\Fres(z^1,z^2)$ states that a longer phase during which a system withstands the impact of an adverse event contributes positively to resilience. 
The \emph{loss} criterion~$\Floss(z^1,z^2)$ states that a smaller performance deficit accumulated over the horizon contributes positively to resilience.
The \emph{maximum performance degradation} (MPD) criterion~$\Fmpd(z^1,z^2)$ states that the smaller the worst-case drop in performance, the higher the resilience. 
In summary, with $Z \coloneqq \Bigl\{\, z=(z^1,z^2) : z^1\in Z^1,\ z^2\in\textstyle\prod_{E\in\mathcal E}Z^{2,E}(z^1) \,\Bigr\}$ and $F(z^1,z^2) \coloneqq \bigl( \Freb(z^1,z^2),\ \Fres(z^1,z^2),\ \Floss(z^1,z^2),\ \Fmpd(z^1,z^2) \bigr) $, the \emph{two-stage multicriteria formulation} reads concisely as
\begin{align}\label{eq:two-stage-problem}
    \min_{(z^1,z^2) \in Z} \quad  & F(z^1,z^2) \tag{$P$} 
\end{align}

\subsection{Flows over Time}\label{sec:flow-over-time}
Let the system be described as a directed network $G=(V,A)$ encompassing a set of vertices $V$ and a set of arcs $A$.
In the flow over time model~\citep{Skutella2009} over a discrete time horizon $\mathcal{T} = \{0,1,\ldots, T\}$, $T \in \NN$, each arc $a \in  A$ has a nonnegative \emph{transit time} $\tau_a \in \NN$ specifying  the required amount of time for flow to travel the arc~$a$. Further, each arc $a \in  A$ has nonnegative \emph{transit capacities}~$\bar{f}_a: \mathcal{T} \rightarrow \RR_{\geq 0}$ specifying upper bounds on the amount of flow that can enter arc~$a$ per time step. Each vertex~$v \in V$ has \emph{supply capacities}~$\bar{s}_v: \mathcal{T} \rightarrow \RR_{\geq 0}$ representing the maximum amount of flow that can be generated, \emph{demands}~$\bar{d}_v: \mathcal{T} \rightarrow \RR_{\geq0}$ representing the amount of flow required, and \emph{storage capacities}~$\bar{\kappa}_v: \mathcal{T} \rightarrow \RR_{\geq 0}$ representing the maximum amount of flow that can be stored at vertex $v$ at each time step~$\theta$. Then, a \emph{flow over time}~$x = (f,s,d,u)$ is given by
\begin{itemize}
    \item functions $f_{a} : \mathcal{T} \rightarrow \RR_{\geq 0}$ adhering to the transit capacity constraint~$f_{a}(\theta) \leq \bar{f}_a(\theta)$ for all $a \in A$ and $\theta \in \mathcal{T}$, where $f_{a}(\theta)$ is understood as the rate of flow entering arc $a$ at time $\theta$,
    \item functions~$s_v : \mathcal{T} \rightarrow \RR_{\geq 0}$ satisfying the supply generation constraint~$s_v(\theta) \leq \bar{s}_v(\theta)$ for all $v \in V$ and $\theta \in \mathcal{T}$, 
    where $s_{v}(\theta)$ is understood as the rate of supply generated at vertex $v$ at time $\theta$, 
    \item functions~$d_v : \mathcal{T} \rightarrow \RR_{\geq 0}$ such that $\sum_{\xi = 0}^{\theta} d_v(\xi) \leq \sum_{\xi = 0}^{\theta} \bar{d}_v(\xi)$ for all $v \in V$ and $\theta \in \mathcal{T}$, where 
    $\sum_{\xi = 0}^{\theta} d_v(\xi)$ is understood as the cumulative amount of demand met at vertex~$v$ up to time $\theta$,
    \item (auxiliary) functions~$u_v : \mathcal{T} \rightarrow \RR$ such that $\sum_{\xi = 0}^\theta d_v(\xi) + \sum_{\xi = 0}^\theta u_v(\xi) = \sum_{\xi = 0}^\theta \bar{d}_v(\xi)$ for all $v \in V$ and  $\theta \in \mathcal{T}$, where $\sum_{\xi = 0}^{\theta} u_{v}(\xi)$ is understood as the cumulative amount of unmet demand at vertex~$v$ up to time~$\theta$.\footnote{Note that met demand is constrained cumulatively and, consequently, this formulation permits that demand arising at one time step may be served at a later time step. Accordingly, a value $u_v(\theta) < 0$ corresponds to a time step at which previously outstanding demand is served.} 
\end{itemize} 
In the following, for simplification of notation, we just write $f_{a}(\theta) = 0$ for $\theta \notin \mathcal{T}$. Further, for a vertex $v \in V$, let $\delta^+(v)$ and $\delta^-(v)$ denote the set of arcs leaving and entering vertex~$v$, respectively. 
Based on that, the excess~$\textup{ex}_v(\theta)$ of a vertex~$v \in V$ at time $\theta$ is defined as the net amount of flow that enters vertex $v$ up to time $\theta$ and thus can be understood as the amount of flow stored at vertex $v$ at time step $\theta$, i.e., 
\begin{align*}
    \textup{ex}_{v}(\theta) \coloneqq &\sum_{a \in \delta^-(v)} \left( \sum_{\xi = 0}^{\theta - \tau_a} f_{a}(\xi) \right) - \sum_{a \in \delta^+(v)} \left( \sum_{\xi = 0}^{\theta} f_{a}(\xi) \right) + \sum_{\xi = 0}^{\theta} s_v(\xi) - \sum_{\xi = 0}^{\theta} d_v(\xi).
\end{align*}
The flow $x$ is called \emph{feasible} if it satisfies the weak conservation constraints $\textup{ex}_{v}(\theta) \geq 0$ for all $\theta \in \mathcal{T}$ and meets the storage capacities $\textup{ex}_{v}(\theta) \leq \bar{\kappa}_v(\theta)$ for all $v \in V$ and $\theta \in \mathcal{T}$. The \emph{set of feasible flows} is then given by all feasible flows and denoted by~$X$. 
We consider that the system's performance under routine operations is measured by the cumulative unmet demand.
Accordingly, the \emph{performance curve in unmet demand} of $x$ over time horizon $\mathcal{T}$ is given by $P_x: \mathcal{T} \rightarrow \RR, \theta \mapsto P_x(\theta) \coloneqq \sum_{v \in V} \sum_{\xi=0}^{\theta} u_v(\xi)$.

\subsection{Adverse Events and Reactive Flows over Time}\label{sec:adverse-events-reactive-flows}

An \emph{adverse event} is described by time-dependent demand surges and by shocks to the system's transit, supply, and storage capacities.
Formally, an adverse event~$E$ is given by a start time~$\theta^E \in \mathcal{T}$ indicating the time step at which the occurrence of the adverse event becomes known, a duration~$\ell^E$ such that $\theta^E + \ell^E \leq T$, and time-dependent impacts
$
    \Delta^E(\theta) = \left( 
    \Delta^E \bar{f}_a(\theta), 
    \Delta^E \bar{s}_{v}(\theta),
    \Delta^E \bar{d}_{v}(\theta),
    \Delta^E \bar{\kappa}_{v}(\theta)
\right),
$
where
\begin{itemize}
\item $\Delta^E \bar{f}_a(\theta) \in [-\bar{f}_a(\theta), 0]$, $a \in A$, is the transit capacity reduction for arc $a$ for $\theta^E \leq \theta \leq \theta^E + \ell^E - 1$,
\item  $\Delta^E \bar{s}_{v}(\theta) \in [-\bar{s}_{v}(\theta), 0]$, $v \in V$, is the supply reduction at vertex $v$ for $\theta^E \leq \theta \leq \theta^E + \ell^E - 1$,
\item  $\Delta^E \bar{d}_v(\theta) \in [0, \infty)$, $v \in V$, is the demand surge at vertex $v$ for $\theta^E \leq \theta \leq \theta^E + \ell^E - 1$,
\item $\Delta^E \bar{\kappa}_v(\theta) \in [-\bar{\kappa}_v(\theta), 0]$, $v \in V$, is the storage capacity reduction at vertex $v$ for $\theta^E \leq \theta \leq \theta^E + \ell^E -1$.
\end{itemize}
Additionally, each adverse event~$E$ is associated with a non-negative weight~$w^E \geq 0$ which, for instance, can be interpreted as its importance. Denoting by $\mathcal{E}$ the set of considered adverse events, we normalize the weights so that $\sum_{E \in \mathcal{E}} w^E = 1$.
For simplicity, we set reductions and increments to be zero if $\theta \notin \{\theta^E,\ldots,\theta^E + \ell^E -1\}$. Then, the \emph{impacted transit capacities}, \emph{impacted supply capacities}, \emph{impacted demands} and \emph{impacted storage capacities} are given by $\bar{f}(\theta) + \Delta^E \bar{f} (\theta)$, $\bar{s}(\theta) + \Delta^E \bar{s}(\theta)$, $\bar{d}(\theta) + \Delta^E \bar{d}(\theta)$ and $\bar{\kappa}(\theta) + \Delta^E \bar{\kappa}(\theta)$, respectively.
We assume that the start time~$\theta^E$ of adverse events~$E$ cannot be anticipated. In contrast, we assume that the impact, duration, and weight of such events can often be estimated beforehand based on historical data or scenario analysis. This is feasible, for instance, in the context of earthquakes~\citep{jabbarzadeh2014dynamic} or heavy rainfall events~\citep{IPCC2021}.

\medskip

Any flow over time~$x = (f,s,d,u)$ that is initially feasible may become infeasible upon the start of an adverse event~$E$. Such infeasibility may arise, for instance, from violations of the impacted capacities or storage capacities. That is, the flow must be adjusted \emph{in reaction} to the adverse event. Given an adverse event~$E$, let $\mathcal{T}^E \coloneqq \{\theta^E, \theta^E + 1, \ldots, T\}$. Then, given a nominal flow~$x = (f,s,d,u)$, a \emph{reactive flow}~$x^E = (f^E,s^E,d^E,u^E)$ to the adverse event~$E$ with reaction horizon~$\mathcal{T}^E$ is given by
\begin{itemize}
    \item functions $f^E_a : \mathcal{T}^E \rightarrow \RR_{\geq 0}$ adhering to the impacted transit capacity constraint~$f^E_a(\theta) \leq \bar{f}_a(\theta) + \Delta^E \bar{f}_a(\theta)$ for all $\theta \in \mathcal{T}^E$, 
    \item functions~$s^E_v : \mathcal{T}^E \rightarrow \RR_{\geq 0}$ satisfying the impacted supply generation constraint~$s^E_v(\theta) \leq \bar{s}_v(\theta) + \Delta^E \bar{s}_v(\theta)$ for all $\theta \in \mathcal{T}^E$, 
    \item functions~$d^E_v : \mathcal{T}^E \rightarrow \RR_{\geq 0}$ such that $\sum_{\xi = 0}^{\theta^E - 1} d_v(\xi) + \sum_{\xi = \theta^E}^{\theta} d^E_v(\xi) \leq \sum_{\xi=0}^{\theta} \left( \bar{d}_v(\xi) + \Delta^E \bar{d}_v(\xi) \right)$ for all $\theta \in \mathcal{T}^E$, 
    \item (auxiliary) functions~$u^E_v : \mathcal{T}^E \rightarrow \RR$ such that $\sum_{\xi = 0}^{\theta^E - 1} d_v(\xi) + \sum_{\xi = \theta^E}^{\theta} d^E_v(\xi) + \sum_{\xi = 0}^{\theta^E - 1} u_v(\xi) + \sum_{\xi = \theta^E}^{\theta} u^E_v(\xi) = \sum_{\xi=0}^{\theta} \left( \bar{d}_v(\xi) + \Delta^E \bar{d}_v(\xi) \right)$ for all $\theta \in \mathcal{T}^E$,
\end{itemize}
Further, to allow flow being discarded and therefore to guarantee the existence of feasible reactive flows, we augment the vertex set $V$ with an artificial disposal vertex $v^{\text{disp}}$ with infinite storage capacities~$\bar{\kappa}_{v^{\text{disp}}}(\theta) = \infty$ 
and zero supplies and demands~$\bar{s}_{v^{\text{disp}}}(\theta) = 0$, $\bar{d}_{v^{\text{disp}}}(\theta) = 0$ for all $\theta \in \mathcal{T}$. Moreover, for every vertex~$v$ not equal to $v^{\text{disp}}$, we add an arc~$(v,v^{\text{disp}})$ to $A$ with capacity~$\bar{f}_{(v,v^{\text{disp}})}(\theta) = \infty$ for all $\theta \in \mathcal{T}$ and $\tau_{(v,v^{\text{disp}})} = 0$. Then, flow sent to $v^{\text{disp}}$ represents \emph{discarded flow} in reaction to the adverse event~$E$.
Analogously to the nominal case, the excess at vertex~$v$ at time~$\theta$ with respect to the reactive flow~$x^E$ is now given by $\textup{ex}^E_v(\theta) \coloneqq \textup{ex}_v(\theta)$ if $\theta \leq \theta^E -1$ and, otherwise
\begin{align*}
    &\textup{ex}^E_v(\theta) \coloneqq {}
    \sum_{a \in \delta^-(v)} \left(
        \sum_{\substack{\xi \leq \theta^E - 1 \\ \xi + \tau_a \leq \theta}} f_a(\xi)
        + \sum_{\substack{\xi \geq \theta^E \\ \xi + \tau_a \leq \theta}} f^E_a(\xi)
      \right) - \sum_{a \in \delta^+(v)} \left(
        \sum_{\xi = 0}^{\theta^E - 1} f_a(\xi)
        + \sum_{\xi = \theta^E}^{\theta} f^E_a(\xi)
      \right) \\
    &\qquad + \left(
        \sum_{\xi = 0}^{\theta^E - 1} s_v(\xi)
        + \sum_{\xi = \theta^E}^{\theta} s^E_v(\xi)
      \right) - \left(
        \sum_{\xi = 0}^{\theta^E - 1} d_v(\xi)
        + \sum_{\xi = \theta^E}^{\theta} d^E_v(\xi)
      \right).
\end{align*}
The reactive flow~$x^E$ is called \emph{feasible} if it satisfies the weak conservation constraints $\textup{ex}^E_v(\theta) \geq 0$ and meets the impacted storage capacities $\textup{ex}^E_v(\theta) \leq \bar{\kappa}_v(\theta) + \Delta^E \bar{\kappa}_v(\theta)$ for all $v \in V$ and all $\theta \in \mathcal{T}^E$. Note that, following the assumption that the time of occurrence of adverse events cannot be anticipated, the reactive flow coincides with the nominal flow at all time steps prior to the start time of the adverse event.
Finally, the performance curve in unmet demand of $x^E$ over time horizon~$\mathcal{T}$ is given by $P_{x^E}: \mathcal{T} \rightarrow \RR, \theta \mapsto P_{x^E}(\theta) \coloneqq \sum_{v \in V} \left( \sum_{\xi = 0}^{\theta^E - 1} u_v(\xi) + \sum_{\xi=\theta^E}^{\theta} u^E_v(\xi) \right)$.

\section{A Two-Stage Multicriteria Optimization Model to Improve Resilience}\label{sec:multiobjective}
Building on the problem setting in Section~\ref{sec:problem-formulation}, we now present the complete two-stage multicriteria optimization model. We first detail how the four resilience criteria rebound, resistance, loss, and MPD are modeled as objective functions (Section~\ref{sec:objectives}). We then specify the first-stage decision space and illustrate how the complementary conditions of preparedness and effectiveness are incorporated as constraints (Section~\ref{sec:first-stage}). Finally, we introduce the second-stage decision space of reactive flows over time, describe how the supporting preparedness decisions are activated in reaction to an adverse event, and specify how the first- and second-stage decisions are coupled (Section~\ref{sec:second-stage}). A summary of the complete model formulation can be found in~\ref{app:complete-model-formulation}.

\subsection{Objective Functions}\label{sec:objectives}
In the context of flow over time, performance under routine operations can be captured using a \emph{nominal flow over time}, while the performance under an adverse event can be captured by a \emph{reactive flow over time}. Consequently, each of the resilience criteria can be expressed in terms of the \emph{performance gap}~$g_E: \mathcal{T} \rightarrow \RR, \theta \mapsto g_E(\theta) \coloneqq P_{x^E}(\theta) - P_{x}(\theta)$, that is, the difference between the performance of a reactive flow~$x^E$ and that of the nominal flow~$x$. Since the reactive flow coincides with the nominal flow prior to the event, $g_E(\theta)$ is expressed in the model variables, for $\theta \in \mathcal{T}^E$, as
$g_E(\theta) = \sum_{v \in V} \sum_{\xi = \theta^E}^{\theta} \bigl( u^E_v(\xi) - u_v(\xi) \bigr)$,
on the basis of which we can model each resilience criteria.

The \emph{rebound} criterion states that the earlier the system attains and returns to routine operations, the higher its resilience. Correspondingly, for each adverse event~$E$, we measure the number of time steps until the nominal and reactive performance coincide again. 
To model rebound~$\Freb$ as a minimization objective, we introduce, for each $E \in \mathcal{E}$, binary variables~$r^{\mathrm{reb},E} : \mathcal{T}^E \rightarrow \{0,1\}$ together with the indicator constraints
\begin{align*}
    r^{\mathrm{reb},E}(\theta) = 1 &\Rightarrow g_E(\theta) \leq 0
\end{align*}
for all $\theta \in \mathcal{T}^E$. 
Then, the monotonicity constraints
\begin{align*}
    r^{\mathrm{reb},E}(\theta) &\geq r^{\mathrm{reb},E}(\theta - 1)
\end{align*} 
for all $\theta \in \mathcal{T}^E$ with $\theta > \theta^E$ in combination with the minimization sense of the criterion allow the rebound objective function to be represented by
\begin{align*}
    \Freb(z^1,z^2) &\coloneqq \sum_{E \in \mathcal{E}} w^E \cdot \left( \sum_{\theta \in \mathcal{T}^E} \bigl( 1 - r^{\mathrm{reb},E}(\theta) \bigr) \right).
\end{align*}

The \emph{resistance} criterion states that a longer phase during which a system withstands the impact of an adverse event contributes positively to resilience.
To model resistance~$\Fres$ as a minimization objective, we introduce, for each $E \in \mathcal{E}$, binary variables~$r^{\mathrm{res},E} : \mathcal{T}^E \rightarrow \{0,1\}$ together with the indicator constraints
\begin{align*}
    r^{\mathrm{res},E}(\theta) = 1 &\Rightarrow g_E(\theta) \leq 0
\end{align*}
for all $\theta \in \mathcal{T}^E$. Then, the monotonicity constraints
\begin{align*}
    r^{\mathrm{res},E}(\theta) &\leq r^{\mathrm{res},E}(\theta - 1)
\end{align*}
for all $\theta \in \mathcal{T}^E$ with $\theta > \theta^E$ in combination with the minimization sense of the criterion allow the resistance objective function to be represented by
\begin{align*}
    \Fres(z^1,z^2) &\coloneqq T - \sum_{E \in \mathcal{E}} w^E \cdot \left( \sum_{\theta \in \mathcal{T}^E} r^{\mathrm{res},E}(\theta) \right).
\end{align*}

The \emph{loss} criterion states that a smaller performance deficit accumulated over the horizon contributes positively to resilience. 
For each adverse event $E$, it is the area under the (non-negative part of the) performance gap. 
To model loss~$\Floss$, we introduce, for each $E \in \mathcal{E}$ and $\theta \in \mathcal{T}^E$, a non-negative variable~$r^{\mathrm{loss},E}(\theta) \geq 0$ together with the constraints
\begin{align*}
    g_E(\theta) \leq r^{\mathrm{loss},E}(\theta)
\end{align*}
for all $\theta \in \mathcal{T}^E$, which in combination with the minimization sense of the criterion allow the loss objecive function to be reprsented by
\begin{align*}
    \Floss(z^1,z^2) &\coloneqq \sum_{E \in \mathcal{E}} w^E \cdot \sum_{\theta = \theta^E}^{T}  r^{\mathrm{loss},E}(\theta).
\end{align*}

The \emph{maximum performance degradation} (MPD) criterion states that the smaller the worst-case drop in performance, the higher the resilience. For each adverse event~$E$, it is captured by the peak of the performance gap.
To model MPD~$\Fmpd$, we introduce, for each $E \in \mathcal{E}$, a non-negative variable~$r^{\mathrm{MPD},E} \geq 0$ together with the constraints
\begin{align*}
    g_E(\theta) \leq r^{\mathrm{MPD},E}
\end{align*}
for all $\theta \in \mathcal{T}^E$, which in combination with the minimization sense of the criterion allow the MPD objective function to be represented by
\begin{align*}
    \Fmpd(z^1,z^2) 
    &\coloneqq \sum_{E \in \mathcal{E}} w^E \cdot r^{\mathrm{MPD},E}.
\end{align*}

\subsection{First-Stage Decisions}\label{sec:first-stage}
Following the high-level description of Section~\ref{sec:decision-framework}, the first stage comprises the preparedness decisions and a nominal flow over time. We first formalize the three classes of preparedness decisions, expansive, reinforcing, and supporting, together with the preparedness condition that keeps their scale cost-effective. We then determine the nominal flow over time subject to the effectiveness condition that bounds its deviation from a reference operation.

\emph{Expansive preparedness decisions} refactor the network's parameters by adding redundancy and additional capacities to both nominal and reactive flows to anticipate the impact of the adverse event. To model these decisions, we introduce parameters representing the potential scale of network expansions and decision variables representing the proportion of the maximum expansion implemented. More precisely, expandable arc, supply, and storage capacities are bounded by $\bar{f}^{\mathrm{exp}}_a, \bar{s}^{\mathrm{exp}}_v, \bar{\kappa}^{\mathrm{exp}}_v \in \RR_{\geq 0}$, and decision variables $\alpha_a, \beta_v, \gamma_v \in [0,1]$ determine the implemented proportion, so that the decisions add $\bar{f}^{\mathrm{exp}}_a \alpha_a$, $\bar{s}^{\mathrm{exp}}_v \beta_v$, and $\bar{\kappa}^{\mathrm{exp}}_v \gamma_v$ to the respective capacities. Expansions to network topology, such as the addition of new arcs and new supply vertices, are, without loss of generality, included in the original network with zero initial arc and supply capacities.

\emph{Reinforcing preparedness decisions} reduce the vulnerability of components upon the start of an adverse event~\citep{Hien2020}. Variables $\zeta_a, \sigma_v, \eta_v \in [0,1]$ represent the proportion of the adverse event's impact mitigated through reinforcements into arc, supply, and storage capacity, respectively.

\emph{Supporting preparedness decisions} establish flexible resources or capabilities that become available only upon the start of an adverse event to address the immediate negative effects. In the first stage, variables $\Phi,\Pi,\Psi \in \RR_{\geq 0}$ quantify investments in additional arc, supply, and storage resources, respectively. The reserves that these investments make available are bounded by supporting capacities $\bar{f}^{\text{sup}}_a, \bar{s}^{\text{sup}}_v, \bar{\kappa}^{\text{sup}}_v\in \RR_{\geq 0}$. The activation of these reserves takes place in reaction to an adverse event and is therefore part of the second-stage decisions (Section~\ref{sec:second-stage}).

Any of these preparedness decisions is subject to costs. Let $c_{\alpha}, c_{\beta}, c_{\gamma}$ denote the cost coefficients of the expansive decisions on arc, supply, and storage capacities, $c_{\bar{f}}, c_{\bar{s}}, c_{\bar{\kappa}}$ those of the reinforcing decisions on arc, supply, and storage capacities, and $c_{\Phi}, c_{\Pi}, c_{\Psi}$ those of the supporting decisions on arc, supply, and storage resources. The total preparedness expenditure across all preparedness decisions is then given by $$R^{\mathrm{prep}}(\alpha,\beta,\gamma,\zeta,\sigma,\eta,\Phi,\Pi,\Psi) \coloneqq c_{\alpha}^{\top}\alpha + c_{\beta}^{\top}\beta + c_{\gamma}^{\top}\gamma + c_{\bar{f}}^{\top}\zeta + c_{\bar{s}}^{\top}\sigma + c_{\bar{\kappa}}^{\top}\eta + c_{\Phi}\Phi + c_{\Pi}\Pi + c_{\Psi}\Psi$$
and the \emph{preparedness condition} requires that this expenditure stay within a single joint budget $B^{\mathrm{prep}}$, 
\begin{align}\label{eq:preparedness-budget}
    R^{\mathrm{prep}}(\alpha,\beta,\gamma,\zeta,\sigma,\eta,\Phi,\Pi,\Psi) \leq B^{\mathrm{prep}}.
\end{align}
Structural restrictions may additionally couple the preparedness decisions to their activation in reaction to an adverse event, for example by enforcing integrality on certain decisions or by limiting the number of components that may receive support. As these restrictions involve the second-stage activation decisions, they are stated in Section~\ref{sec:second-stage}.

Beyond the preparedness decisions, the first stage determines a \emph{nominal flow over time}~$x = (f,s,d,u)$ that represents the operation of the system under routine operations. This nominal flow is a flow over time in the sense of Section~\ref{sec:flow-over-time}, except that it may exploit the expanded capacities provided by the expansive preparedness decisions. Accordingly, the nominal flow respects the expanded capacities, for all $a \in A$ and all $\theta \in \mathcal{T}$,
\begin{align}
    f_a(\theta) &\leq \bar{f}_a(\theta) + \bar{f}_a^{\textup{exp}} \, \alpha_a 
\end{align}
and, for all $v \in V$ and all $\theta \in \mathcal{T}$,
\begin{align}
    s_v(\theta) &\leq \bar{s}_v(\theta) + \bar{s}^{\text{exp}}_v \, \beta_v \\
    \sum_{\xi=0}^\theta d_v(\xi) &\leq \sum_{\xi=0}^\theta \bar{d}_v(\xi) \\  
    \sum_{\xi=0}^\theta d_v(\xi) + \sum_{\xi=0}^\theta u_v(\xi) &= \sum_{\xi=0}^\theta \bar{d}_v(\xi) \\
    0 \leq \textup{ex}_v(\theta) &\leq \bar{\kappa}_v(\theta) + \bar{\kappa}^{\text{exp}}_v \, \gamma_v.
\end{align}

The nominal flow is further subject to the \emph{effectiveness condition}, which permits modifications of the nominal flow~$x$ only within controlled deviations from a given \emph{reference flow over time}~$x^{\textup{ref}} = (f^{\text{ref}},s^{\text{ref}},d^{\text{ref}},u^{\text{ref}})$. Following the notion of stability~\citep{liu2022network}, let $H^{\textup{effect}}(\cdot,x^{\textup{ref}}) : X \rightarrow \RR^m_{\geq 0}$ be an affine-linear vector-valued mapping quantifying the deviation between~$x$ and~$x^{\textup{ref}}$, and let $B^{\textup{effect}} \in \RR^m_{\geq 0}$ be a vector of tolerances. Effectiveness then requires the deviation to stay within the tolerances, i.e.,
\begin{align}\label{eq:effectiveness}
    H^{\textup{effect}}(x,x^{\textup{ref}}) \leq B^{\textup{effect}}.
\end{align}
In summary, the  \emph{first-stage decision space}~$Z^1$ consists of all $z^1 = (x,\alpha,\beta,\gamma,\zeta,\sigma,\eta,\Phi,\Pi,\Psi)$ that satisfy
\eqref{eq:preparedness-budget} to \eqref{eq:effectiveness} with variable domains $f_a(\theta) \geq 0$ for all $a \in A$ and $\theta \in \mathcal{T}$, $s_v(\theta), d_v(\theta) \geq 0$ and $u_v(\theta) \in \RR$ for all $v \in V$ and $\theta \in \mathcal{T}$, $\alpha_a, \zeta_a \in [0,1]$ for all $a \in A$, $\beta_v, \gamma_v, \sigma_v, \eta_v \in [0,1]$ for all $v \in V$, and $\Phi, \Pi, \Psi \geq 0$.

\subsection{Second-Stage Decisions}\label{sec:second-stage}
Upon the start of an adverse event $E \in \mathcal{E}$ at time step~$\theta^E$, the nominal flow may become infeasible, and a reactive flow over time must be determined over the remaining horizon~$\mathcal{T}^E$. This reactive flow must respect the impacted capacities, but may benefit from first-stage preparedness decisions, as expansive decisions raise the base capacity, reinforcing decisions attenuate the impact of the event, and supporting decisions provide additional capacity that may flexible be activated.

The activation of the supporting reserves is a second-stage decision. For each adverse event~$E$, the activated amounts are given by decision variables $\phi^E_a, \pi^E_v, \psi^E_v : \mathcal{T}^E \rightarrow \RR_{\geq 0}$, which respect the supporting capacities $\bar{f}^{\text{sup}}_a, \bar{s}^{\text{sup}}_v, \bar{\kappa}^{\text{sup}}_v$ and whose total activation over the reaction horizon~$\mathcal{T}^E$ is limited by the respective invested budget, for all $a \in A$, $v \in V$, and $\theta \in \mathcal{T}^E$,
\begin{subequations}\label{eq:supporting-activation}
\begin{align}
    &&\phi^E_a(\theta) &\leq \bar{f}^{\text{sup}}_a,&&\sum_{a \in A} \sum_{\theta \in \mathcal{T}^E} \phi^E_a(\theta) \leq \Phi,&& \\
    &&\pi^E_v(\theta) &\leq \bar{s}^{\text{sup}}_v,&& \sum_{v \in V} \sum_{\theta \in \mathcal{T}^E} \pi^E_v(\theta) \leq \Pi,&& \\
    &&\psi^E_v(\theta) &\leq \bar{\kappa}^{\text{sup}}_v,&& \sum_{v \in V} \sum_{\theta \in \mathcal{T}^E} \psi^E_v(\theta) \leq \Psi.&&
\end{align}
\end{subequations}
Since the activation is summed over the reaction horizon, the supporting reserves are consumable: activating a unit of a resource at two time steps draws twice on the corresponding budget.

Given these activation decisions, the reactive flow $x^E = (f^E, s^E, d^E, u^E)$ respects the impacted capacities enhanced by preparedness. The base capacity raised by the expansive decisions, the impact attenuated by the reinforcing decisions, and the activated supporting reserves combine so that, for all $a \in A$ and all $\theta \in \mathcal{T}^E$,
\begin{align}
    &f^E_a(\theta) \leq \bigl(\bar{f}_a(\theta) + \bar{f}^{\text{exp}}_a \, \alpha_a\bigr) + (1 - \zeta_a) \, \Delta^E \bar{f}_a(\theta) + \phi^E_a(\theta)
\end{align}
and, for all $v \in V$ and all $\theta \in \mathcal{T}^E$,
\begin{align} 
    &s^E_v(\theta) \leq \bigl(\bar{s}_v(\theta) + \bar{s}^{\text{exp}}_v \, \beta_v\bigr) + (1 - \sigma_v) \, \Delta^E \bar{s}_v(\theta) + \pi^E_v(\theta)
\end{align}
\begin{align}
    0 \leq &\textup{ex}^E_v(\theta) \leq \bigl(\bar{\kappa}_v(\theta) + \bar{\kappa}^{\text{exp}}_v \, \gamma_v\bigr) + (1 - \eta_v) \, \Delta^E \bar{\kappa}_v(\theta) + \psi^E_v(\theta),
\end{align}
Moreover, the reactive met and unmet demand respect the impacted demands, for all $v \in V$ and all $\theta \in \mathcal{T}^E$,
\begin{subequations}
\begin{align}
    \sum_{\xi=0}^{\theta^E - 1} d_v(\xi) + \sum_{\xi=\theta^E}^{\theta} d^E_v(\xi) &\leq \sum_{\xi=0}^{\theta} \bigl( \bar{d}_v(\xi) + \Delta^E \bar{d}_v(\xi) \bigr)\\
    \sum_{\xi=0}^{\theta^E - 1} \bigl( d_v(\xi) + u_v(\xi) \bigr) + \sum_{\xi=\theta^E}^{\theta} \bigl( d^E_v(\xi) + u^E_v(\xi) \bigr) &= \sum_{\xi=0}^{\theta} \bigl( \bar{d}_v(\xi) + \Delta^E \bar{d}_v(\xi) \bigr).
\end{align}
\end{subequations}

Finally, additional restrictions on the preparedness decisions may couple the two stages directly, beyond the dependence of the reactive flow on the first-stage decisions. For each adverse event~$E$, these are captured by the linear constraints
\begin{align}\label{eq:coupling}
    &H^{\mathrm{prep}}\bigl(\alpha,\beta,\gamma,\zeta,\sigma,\eta,\Phi,\Pi,\Psi,\phi^E,\pi^E,\psi^E\bigr) \leq B^{\mathrm{prep,\,str}}.
\end{align}
In summary, given first-stage decisions~$z^1 \in Z^1$, the \emph{second-stage decision space} $\prod_{E \in \mathcal{E}} Z^{2,E}(z^1)$ is the product, over all adverse events, of the per-event decision spaces $Z^{2,E}(z^1)$. For a fixed adverse event~$E \in \mathcal{E}$, the set~$Z^{2,E}(z^1)$ consists of all $z^{2,E} = (x^E,\phi^E,\pi^E,\psi^E)$ that satisfy \eqref{eq:supporting-activation} to \eqref{eq:coupling} with the variable domains $f^E_a(\theta) \geq 0$ for all $a \in A$ and $\theta \in \mathcal{T}^E$, $s^E_v(\theta), d^E_v(\theta) \geq 0$ and $u^E_v(\theta) \in \RR$ for all $v \in V$ and $\theta \in \mathcal{T}^E$, $\phi^E_a(\theta) \geq 0$ for all $a \in A$ and $\theta \in \mathcal{T}^E$, and $\pi^E_v(\theta), \psi^E_v(\theta) \geq 0$ for all $v \in V$ and $\theta \in \mathcal{T}^E$.

\section{Multicriteria Optimization and Solution Algorithm}\label{sec:solution}
The two-stage problem~\eqref{eq:two-stage-problem} simultaneously minimizes four resilience criteria and is therefore a multicriteria optimization problem. In this section, we first introduce the notation and definitions used in multicriteria optimization. For a more in-depth introduction, we refer to~\citet{matthias_ehrgott_multicriteria_2005}. We then develop a decomposition of the problem into a family of bicriteria linear programs. We prove that solving all of these bicriteria problems is sufficient to solve~\eqref{eq:two-stage-problem}. Then, we establish the intractability of~\eqref{eq:two-stage-problem} by proving that, in general, superpolynomially many of these bicriteria problems must be solved and that, conversely, any exact solution method for~\eqref{eq:two-stage-problem} determines at least one distinct solution for each of them. This intractability motivates the development of a heuristic rather than an exact solution method. We therefore utilize so-called \emph{scalarizations} to obtain a heuristic approach for solving the problem. All proofs can be found in~\ref{app:proofs}.
 
\medskip
 
In the following, we call an image~$F(z)$ of a feasible solution~$z \in Z$ a \emph{feasible image}, and denote by $Y\coloneqq F(Z) = \{ F(z) : z \in Z\} \subseteq \RR^4$ the \emph{image set}. In multicriteria optimization problems, the notion of optimality is determined by an ordering relation. Let $y \leqq y'$ if and only if $y_i \le y'_i$ for all $i=1,\ldots,4$. Then, a feasible solution~$z\in Z$ \emph{dominates} another feasible solution~$z'\in Z$ if $F(z)\leqq F(z')$ and $F(z)\neq F(z')$. A solution is called \emph{efficient} if no other feasible solution dominates it, and in this case its image~$F(z)$ is called \emph{nondominated}. The \emph{efficient solution set} is denoted by $Z_E\coloneqq\{z\in Z : z \text{ is efficient}\}$ and the \emph{nondominated image set} by $Y_N\coloneqq F(Z_E)$. Since two efficient solutions may share the same image, the typical goal of multicriteria optimization is to find a set of feasible solutions $Z^*\subseteq Z$ that contains, for every nondominated image~$y\in Y_N$, at least one efficient solution~$z$ with $F(z)=y$.
 
\medskip
 
We approach this goal by exploiting the structure of the problem. To this end, we define the \emph{temporal resilience profile} $\ell=(\bm\tau^{\mathrm{res}},\bm\tau^{\mathrm{reb}})\in\prod_{E\in\mathcal E}\{0,\dots,\lvert\mathcal T^E\rvert\}^2$, and we let $\mathcal Z(\ell)$ denote the set of feasible solutions whose resistance and rebound indicators are fixed to the profile, that is, for every~$E \in \mathcal{E}$, it is $r^{\mathrm{res},E}(\theta)=1$ if and only if $\theta \in \{\theta^E,\dots,\theta^E+\tau^{\mathrm{res}}_E-1\}$ and $r^{\mathrm{reb},E}(\theta)=1$ if and only if $\theta \in \{\theta^E+\tau^{\mathrm{reb}}_E,\dots,T\}$. Then, $\mathcal Z(\ell)$ is a polyhedron and the two temporal resilience criteria are, for every $z\in\mathcal Z(\ell)$, constant with
$\Freb(\ell) = \sum_{E\in\mathcal E} w^E\,\tau^{\mathrm{reb}}_E$ and $\Fres(\ell) = T-\sum_{E\in\mathcal E} w^E\,\tau^{\mathrm{res}}_E$.
Consequently, on $\mathcal Z(\ell)$, only the two performance criteria loss and MPD vary. Since fixing a temporal resilience profile determines all integer variables in the program, the resulting problem can be defined as a linear program, which we call the \emph{residual bicriteria linear program}
\begin{align}\label{eq:residual-bi-obj}\tag{$P(\ell)$}
    \min_{z\in\mathcal Z(\ell)}\bigl(\Floss(z),\Fmpd(z)\bigr).
\end{align}
The reduction to a bicriteria linear program is attractive in two respects. First, for such a program, the nondominated set is a connected, piecewise-linear curve consisting of finitely many line segments, each spanned by two nondominated images~\citep{matthias_ehrgott_multicriteria_2005}. Hence, although in general infinite in cardinality, the nondominated set can be represented exactly by finitely many nondominated images. Second, these nondominated images can be computed by a dichotomic search that solves a sequence of linear programs, the number of which is linear in the number of these images~\citep{anejaBicriteriaTransportationProblem1979}. We refer to~\ref{app:dichotomic-search} for further details.
 
\medskip
 
Let $Z_E(\ell)\subseteq\mathcal Z(\ell)$ and $Y_N(\ell)\subseteq\RR^2$ be the residual efficient and nondominated set, respectively, and let $\mathcal L$ be the finite set of temporal resilience profiles. Associating with each $\ell\in\mathcal L$ its constant temporal image and its residual nondominated set yields the \emph{lifted image set} $\widehat{Y}(\ell)\coloneqq \bigl\{\,\bigl(\Freb(\ell),\Fres(\ell),y_{1},y_{2}\bigr): (y_{1},y_{2})\in Y_N(\ell)\,\bigr\}$. If $\mathcal Z(\ell)=\emptyset$, the residual problem is infeasible and $Y_N(\ell)=\emptyset$. Solving~\eqref{eq:residual-bi-obj} for every $\ell\in\mathcal L$ then suffices to solve~\eqref{eq:two-stage-problem}.
\begin{theorem}\label{thm:decomp}
For each $\ell\in\mathcal L$, let $Z^*(\ell)\subseteq\mathcal Z(\ell)$ contain, for each $y(\ell)\in Y_N(\ell)$, a solution $z(\ell)$ with $\bigl(\Floss(z(\ell)),\Fmpd(z(\ell))\bigr)=y(\ell)$. Then $\bigcup_{\ell\in\mathcal L}Z^*(\ell)$ contains, for each $y\in Y_N$, an efficient solution $z$ with $F(z)=y$.
\end{theorem}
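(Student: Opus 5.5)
The proof has two parts. First, $Z$ is covered by the sets $\mathcal Z(\ell)$, $\ell\in\mathcal L$. Second, a nondominated image of \eqref{eq:two-stage-problem} is also nondominated for the residual problem \eqref{eq:residual-bi-obj} of a suitable profile. The solution supplied by $Z^*(\ell)$ then has exactly the same four-dimensional image, and so it is efficient.

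\textbf{Covering.} A feasible $z\in Z$ carries binary indicators $r^{\mathrm{res},E}$ and $r^{\mathrm{reb},E}$ for every $E\in\mathcal E$. The monotonicity constraints make $r^{\mathrm{res},E}$ a nonincreasing $0/1$ sequence on $\mathcal T^E$, so it is $1$ exactly on an initial segment $\{\theta^E,\dots,\theta^E+\tau^{\mathrm{res}}_E-1\}$ with $\tau^{\mathrm{res}}_E\in\{0,\dots,\lvert\mathcal T^E\rvert\}$. Likewise $r^{\mathrm{reb},E}$ is nondecreasing, so it is $1$ exactly on a final segment $\{\theta^E+\tau^{\mathrm{reb}}_E,\dots,T\}$. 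Hence every $z\in Z$ lies in $\mathcal Z(\ell)$ for the profile $\ell$ read off from its indicators, and $Z=\bigcup_{\ell\in\mathcal L}\mathcal Z(\ell)$.

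On $\mathcal Z(\ell)$ the objectives are $F(z)=(\Freb(\ell),\Fres(\ell),\Floss(z),\Fmpd(z))$, because $\sum_{\theta\in\mathcal T^E}(1-r^{\mathrm{reb},E}(\theta))=\tau^{\mathrm{reb}}_E$ and $\sum_{\theta\in\mathcal T^E} r^{\mathrm{res},E}(\theta)=\tau^{\mathrm{res}}_E$.

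\textbf{Main step.} Fix $y\in Y_N$, attained by an efficient $\bar z$, and let $\ell$ be the profile of $\bar z$. I claim $(y_3,y_4)\in Y_N(\ell)$.

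Suppose instead that some $z'\in\mathcal Z(\ell)$ has $(\Floss(z'),\Fmpd(z'))\leqq(y_3,y_4)$ with at least one strict inequality. Then $z'\in Z$, and $F(z')$ agrees with $y$ in the two temporal coordinates. So $F(z')\leqq y$ and $F(z')\neq y$, which means $z'$ dominates $\bar z$ and contradicts efficiency.

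One subtlety needs care here. The loss and MPD values depend on the auxiliary variables $r^{\mathrm{loss},E}$ and $r^{\mathrm{MPD},E}$, but both objective definitions and the domination argument treat these as ordinary decision variables. The argument therefore applies verbatim, with no need to minimize them out.

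\textbf{Conclusion.} By hypothesis, $Z^*(\ell)$ contains some $z(\ell)\in\mathcal Z(\ell)\subseteq Z$ with $(\Floss(z(\ell)),\Fmpd(z(\ell)))=(y_3,y_4)$. Since it shares the profile $\ell$, we get $F(z(\ell))=(\Freb(\ell),\Fres(\ell),y_3,y_4)=y$.

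It remains to see that $z(\ell)$ is efficient in \eqref{eq:two-stage-problem}. Any solution dominating $z(\ell)$ would have an image that dominates $y=F(\bar z)$, and so it would also dominate $\bar z$, which is impossible because $\bar z$ is efficient. Thus $z(\ell)\in\bigcup_{\ell}Z^*(\ell)$ is an efficient solution with $F(z(\ell))=y$.

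I expect no real obstacle in this argument. The only points needing attention are the covering step, which rests on the monotonicity constraints forcing the indicators to be intervals, and the observation that efficiency depends only on the image.
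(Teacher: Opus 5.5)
Your proof is correct and follows essentially the same route as the paper: you place an efficient preimage $\bar z$ of $y$ in $\mathcal Z(\ell)$ for its realized profile, show by contradiction that $(y_3,y_4)\in Y_N(\ell)$ because the temporal coordinates are constant on $\mathcal Z(\ell)$, and conclude that the representative in $Z^*(\ell)$ has image $y$ and is therefore efficient. Your explicit covering step via the monotonicity constraints fixes $\ell$ somewhat more cleanly than the paper's proof text, which defines $\ell$ from the scalar values $\Fres(z),\Freb(z)$ and relies on the realized profile $\ell(z)$ introduced at the start of the appendix.
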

 
Following \cite{eichfelderTestInstanceGenerator2024}, the relevant characteristic to measure the \emph{tractability} of a mixed-integer problem such as~\eqref{eq:two-stage-problem} is the number of temporal resilience profiles that yield distinct nondominated images. In the following, we call a profile $\ell\in\mathcal L$ \emph{efficient} if its lifted image set contains a nondominated image, that is, $\widehat Y(\ell)\cap Y_N\neq\emptyset$. The next theorem shows that the number of efficient temporal resilience profiles can grow superpolynomially in the encoding length.
\begin{theorem}\label{thm:intractable}
	For every even $K \in \NN$ there is an instance of~\eqref{eq:two-stage-problem} with $\lvert\mathcal E\rvert = K$ adverse events, $K+1$ vertices and $K$ arcs that admits $\binom{K}{K/2} \geq 2^{K}/(K+1)$ efficient temporal resilience profiles whose lifted image sets each contain a distinct nondominated image of~\eqref{eq:two-stage-problem}. Since the instance is encoded in $O(K^2)$ bits, the number of efficient temporal resilience profiles is not bounded by any polynomial in the encoding length, and the problem is intractable.
\end{theorem}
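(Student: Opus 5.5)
The plan is to build an explicit star-shaped instance in which every adverse event faces a binary choice between a measure that helps rebound and a measure that helps resistance. I would give the event weights distinct powers of two, so that each choice pattern yields its own pair $(\Freb,\Fres)$ and all these pairs lie on one anti-diagonal. Concretely, take a hub supply vertex $v_0$ and demand vertices $v_1,\dots,v_K$, with arcs $a_k=(v_0,v_k)$ of unit transit time and unit capacity. Demand at $v_k$ is one unit per time step. The reference and nominal flow meets all demand, and I would fix it by the effectiveness condition with zero tolerance.

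Event $E_k$ starts at a common time $\theta^E$ and sets the capacity of $a_k$ to zero for a duration $\ell^E=2$. Its weight is $w^{E_k}=2^{k}/(2^{K+1}-2)$, which needs $O(K)$ bits, so the whole instance fits in $O(K^2)$ bits. There are two preparedness instruments.
\begin{itemize}
\item Reinforcement $\zeta_{a_k}$ keeps $a_k$ open and hence preserves resistance. I would tune the costs so that it can only make the first impacted step gap-free.
\item A consumable supporting arc reserve $\phi$ is activated only at the second step, where it restores rebound earlier.
\end{itemize}
The coupling constraint \eqref{eq:coupling} allows each event to receive at most one of the two instruments. The budgets $B^{\mathrm{prep}}$ and $\Phi$ are set so that exactly $K/2$ events get reinforcement and $K/2$ get activation. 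The first main step is to verify, per event, that the reinforced events have $\tau^{\mathrm{res}}_E=1$ and $\tau^{\mathrm{reb}}_E=2$, while the activated events have $\tau^{\mathrm{res}}_E=0$ and $\tau^{\mathrm{reb}}_E=1$.

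Given this, a subset $S$ of size $K/2$ (the activated events) yields
\[
\Freb = \textstyle\sum_k w_k\cdot 2-\sum_{k\in S}w_k, \qquad \Fres = T-\sum_{k\notin S}w_k .
\]
Hence $\Freb+\Fres$ is the same constant for all $S$. Because the $w_k$ are distinct powers of two, the map $S\mapsto\sum_{k\in S}w_k$ is injective, so the $\binom{K}{K/2}$ profiles give pairwise distinct and pairwise incomparable temporal images. The final counting bound $\binom{K}{K/2}\ge 2^K/(K+1)$ is standard, since this is the largest of the $K+1$ binomial coefficients summing to $2^K$.

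The second step is to show that each of these images is actually nondominated in~\eqref{eq:two-stage-problem}, and this is the main obstacle. First, I would design the gaps so that $\Floss$ and $\Fmpd$ are identical across all these solutions. The natural way is to make the lost demand per event the same regardless of which instrument is used, for instance one unit lost at exactly one step. Then loss equals $\sum_k w_k$ and MPD equals $\sum_k w_k\cdot 1$ in every case. Second, I would prove the lower bound $\Freb+\Fres\ge$ the constant above for every feasible solution, together with $\Floss,\Fmpd$ being at least their common values. The argument is that each event must lose at least one unit of demand somewhere, because the arc is cut and has unit transit time, and the budgets and the coupling admit at most one instrument per event. Any solution weakly dominating one of our images would then have to attain equality everywhere, which forces the same per-event choice pattern and thus the same image.

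Finally, Theorem~\ref{thm:decomp} guarantees that each such image lies in the lifted image set $\widehat Y(\ell)$ of its own profile $\ell$, so each profile is efficient. The delicate part will be calibrating costs and budgets, for example by treating the reinforcement as binary through the integrality option in \eqref{eq:coupling}. The aim is to rule out fractional mixtures that could lower $\Freb+\Fres$ or $\Floss$ below the claimed bounds.
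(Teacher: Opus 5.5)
\textbf{Verdict: there is a genuine gap.} Your route differs from the paper's. You trade $\Freb$ against $\Fres$ at constant loss and MPD. The paper instead makes the temporal criteria coincide ($\Fres=\Freb$) and trades them against the coinciding performance criteria ($\Floss=\Fmpd$). Your use of power-of-two weights and an anti-diagonal for distinctness and incomparability mirrors part (i) of the paper's argument. However, the gadget you describe does not behave as claimed under the model's constraints.

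(i) Reinforcement enters as $(1-\zeta_a)\,\Delta^E\bar f_a(\theta)$ at \emph{every} step of the event. With a full cut of $a_k$ on both impacted steps, $\zeta_{a_k}=1$ simply erases the event: $g_{E_k}\equiv 0$, so the event has maximal resistance, zero rebound and zero loss. Such an event dominates an activated one in every criterion, and the per-event trade-off disappears. Costs only bound how much $\zeta$ is bought; they cannot localize its effect to the first step.

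(ii) Activations $\phi^E_a(\theta)$ may be placed at any $\theta\in\mathcal T^E$, the cap $\bar f^{\mathrm{sup}}_a$ is time-invariant, and $\sum_a\sum_\theta\phi^E_a(\theta)\le\Phi$ holds separately for each event. So $\Phi$ cannot ration activation to $K/2$ events. Moreover, any $\Phi$ and cap that permit the two-unit catch-up at the second step also permit one unit at each impacted step. That again makes the event gap-free, and nothing in your setup forbids it.

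(iii) $g_E$ is a difference of \emph{backlogged} unmet demand. With unit capacity and unit demand per step, a missed unit can never be caught up. So even a reinforcement acting only on the first step would leave the event unrecovered for the rest of the horizon, with loss growing in $T$. This contradicts both $\tau^{\mathrm{reb}}_E=2$ and the constancy of $\Floss$. Also, with unit transit time the arrivals at $\theta^E$ come from pre-event flow, so your values of $\tau^{\mathrm{res}}_E$ are off by one.

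More fundamentally, the step you call the main obstacle is the actual content of the theorem, and you leave it open. You need that no feasible solution weakly dominates one of your images, including solutions with fractional $\zeta$ and partial or split activations. Your proposed escape, making $\zeta$ binary through \eqref{eq:coupling}, is not available. \eqref{eq:coupling} is a linear constraint and $\zeta_a\in[0,1]$ is continuous in the formal model; the paper even stresses that its expansion variables are not binary.

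The paper handles fractional decisions head-on:
\begin{itemize}
\item The temporal criteria react only to full expansion $\alpha_{a_k}=1$, whereas $\Floss=\Fmpd$ decrease linearly in $\alpha_{a_k}$.
\item The complementarity relation \eqref{eq:complementary} places all integral tight decisions on the line $\Fres+\Floss=\tfrac K2\Lambda$.
\item Since $w^{E_k}\delta_k=\Lambda-w^{E_k}$ is strictly decreasing in $k$, every non-tight or fractional $\alpha$ is strictly dominated. One either tops up a coordinate or shifts mass to a lower index (Proposition~\ref{prop:all-efficient}, part (ii)).
\item Combined with part (i), this yields efficiency of all $\binom{K}{K/2}$ integral tight decisions.
\end{itemize}
Even after repairing (i)--(iii), your construction would need an analogous domination argument against every fractional first-stage and second-stage choice. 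Without it, the nondominance of your images, and hence the count of efficient profiles, is unproven.
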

As a consequence, any exact method that determines the nondominated set must, on such instances, enumerate at least $\binom{K}{K/2}$ temporal resilience profiles.
This motivates the design of a heuristic that computes a subset of the efficient set together with an enclosure of the nondominated set.

The first phase samples $H$ well-spaced weight vectors $\lambda\in\RR^4$, $\lambda_i>0$, on the unit simplex using the Das--Dennis method~\citep{dasNormalBoundaryIntersectionNew1998} and solves the weighted sum problem $$\min_{z\in Z}\sum_{i=1}^{4}\lambda_i F_i(z)$$ for each. It is known that every optimal solution of a weighted sum problem is efficient. Weighted sums alone, however, systematically miss a portion of~$Y_N$~\citep{matthias_ehrgott_multicriteria_2005}.

The second phase refines the set using enclosures. Let $\RR^4_{\geqq}$ denote the nonnegative orthant. An \emph{enclosure} is a pair of a lower bound set~$L\subseteq\RR^4$ and an upper bound set~$U\subseteq Y$ with $$Y_N \subseteq (L+\RR^4_{\geqq})\cap(U-\RR^4_{\geqq}) = \bigcup_{l\in L,\,u\in U}[l,u],$$ whose nonempty boxes~$[l,u]$ are the \emph{search zones}~\citep{eichfelder_hybrid_2024,dachert_efficient_2017}. Repeatedly, a search zone of maximal width~$\max_{i}(u_i-l_i)$ is selected and the \emph{augmented weighted Tchebycheff problem} $$\min_{z\in Z}\max_{i}\lambda_i\lvert F_i(z)-r_i\rvert + \mu\sum_{i}\lambda_i F_i(z)$$ is solved with~$r=l$ and $\lambda_i=1/(u_i-l_i)$ if $u_i>l_i$ and $\lambda_i=1/\min_{j:\,u_j>l_j}(u_j-l_j)$ otherwise. Every optimal solution is again efficient, and it is known that any efficient solution is optimal for an augmented weighted Tchebycheff problem with suitably chosen $\lambda,\mu$, and $r$. If $F(z)\in[l,u]$, both bound sets are refined; otherwise the zone contains no nondominated image and is discarded, updating~$L$. The phase terminates once every search zone falls below a prescribed width or a time limit is reached. The procedure is summarized in Algorithm~\ref{alg:enclosure}.

\begin{algorithm}[t]
\caption{Scalarization-based enclosure heuristic for the nondominated set~$Y_N$}
\label{alg:enclosure}
\KwIn{two-stage problem~\eqref{eq:two-stage-problem}; number of weight vectors~$H$; width tolerance~$\overline{w}\in\RR^4_{>0}$; time limit~$\Theta$; dichotomic routine~$\textsc{Dicho}$}
\KwOut{an archive~$Z^*\subseteq Z_E$ with images~$F(Z^*)$ and an enclosure~$(L,U)$}
$Z^*\gets\emptyset$; initialize the enclosure~$(L,U)$\;
 
\ForEach{Das--Dennis weight vector~$\lambda$ with~$H$ weight vectors}{
    solve the weighted sum problem for~$\lambda$; let~$z$ be an optimal solution\;
    update~$Z^*$ by~$z$ and refine~$(L,U)$\;
}
\While{some search zone~$[l,u]$ satisfies~$u_i-l_i>\overline{w}_i$ for some $i\in\{1,\ldots,4\}$ \textnormal{\textbf{and}}~$\Theta$ is not reached}{
    select a search zone~$[l,u]$ of maximal width\;
    solve the augmented weighted Tchebycheff problem with~$r=l$ and~$\lambda_i=1/(u_i-l_i)$; let~$z$ be an optimal solution\;
    \eIf{$F(z)\in[l,u]$}{
        update~$Z^*$ by~$z$ and refine~$(L,U)$\;
    }{
        discard~$[l,u]$ and update the lower bound set~$L$\;
    }
}
$Z^*\gets(Z^*)_E$\;
\Return{$Z^*$, $F(Z^*)$, and~$(L,U)$}\;
\end{algorithm}

\section{Computational Experiments}\label{sec:experiments}


\subsection{Instances and Computational Setup}
To analyze the structural properties of the set of efficient solutions and their nondominated images with respect to the existence of conflicts, their local marginal trade-off rates and 
the relation between each resilience criterion and preparedness decisions, we conduct numerical studies on a set of test instances.
The combination of flows over time, adverse events, and preparedness and effectiveness conditions in a multicriteria setting has not yet been studied as explicitly outlined in~\cite{sharkey_search_2020}.
Hence, we generate static minimum cost flow instances using the NETGEN grid algorithm of~\cite{klingman_netgen_1974} which are expanded over a horizon $T\in\{100,150,200,250,300\}$ and tightened so as to yield highly utilized networks. For each instance, $\lvert\mathcal E\rvert=10$ adverse events are sampled following~\cite{Eshghali2023}. The full generation procedure and a summary of the characteristics of the resulting set of instances~$\mathcal{I}$ can be found in~\ref{app:instances}. 
The heuristic has been implemented using a Python 3.12.8 environment and using Gurobi~12.0.3 as the underlying solver for the scalarized two-stage mixed-integer programs. Computations are performed on a machine with 16-core CPU and 128 GB of RAM in a Red Hat Enterprise Linux environment. The weighted-sum phase generates $H = 56$ well-spaced weight vectors using the method from~\cite{dasNormalBoundaryIntersectionNew1998}, and the enclosure phase terminates once a time limit of 24h is reached or every search zone is narrower than $\overline{w} = (1,1,5,2)$, a width tolerance chosen to reflect the differing scales of rebound, resistance, loss, and MPD, respectively. In the following, for each instance $I \in \mathcal{I}$, we denote by $Z^*_I$ the returned set of solutions.

\subsection{Results - Resilience Criteria}\label{sec:results-multiple-instances}
To explore which criteria vary across the efficient set and are thus in conflict, we consider, for each instance~$I$, the distribution of each criterion's~$i \in \{\mathrm{res}, \mathrm{reb}, \mathrm{loss}, \mathrm{MPD}\}$ normalized range over the returned sets of solutions~$Z^*_I$:
$$
\rhorange_i (I) \coloneqq \dfrac{\max_{z^*}F^i(z^*) -\min_{z^*}F^i(z^*)}{\max_{z^*}F^i (z^*)},
$$
if $\max_{z^*}F^i(z^*)>0$ and $\rhorange_i (I) \coloneqq 0$ otherwise.
Figure~\ref{fig:instances-type-ranges} (Left) reports the normalized range $\rho_i^{\mathrm{range}}(I)$ of each criterion~$i$ across the instances~$\mathcal{I}$. The rebound duration attains by far the widest range (median $0.21$, with maximum normalized range of $0.72$), whereas the remaining three criteria vary comparatively little: loss and MPD attain medians of approximately $0.09$ each (maximum normalized range of $0.31$ and $0.32$), and the resistance duration attains the narrowest range of all (median $0.03$, maximum normalized range of $0.32$). Across the computed set of solutions, rebound thus spans a substantially larger portion of its attainable range than the other criteria, so that solutions that are near-indistinguishable in loss, MPD, and resistance may still differ substantially in rebound.

\emph{On the instances studied, a target rebound value is therefore the most consequential input for the choice of a solution. Rebound is the only criterion whose value varies substantially across the efficient set. Therefore, formulating a rebound target value resolves the criterion along which candidate solutions genuinely differ, whereas targets on loss, MPD, and resistance have almost no discriminatory power among candidate solutions.}

\begin{figure}[tb]
    \centering
    \begin{subfigure}[t]{0.49\textwidth}
        \centering
\begin{tikzpicture}
\begin{axis}[
    boxplot/draw direction=y,
    ylabel={$\rho^{\mathrm{range}}$},
    ymin=0, ymax=1,
    ytick={0,0.2,0.4,0.6,0.8,1},
    xtick={1,2,3,4},
    xlabel={Resilience Criteria},
    xticklabels={$F^{\mathrm{res}}$,$F^{\mathrm{reb}}$,$F^{\mathrm{loss}}$,$F^{\mathrm{MPD}}$},
    height=11cm, width=11cm,
    boxplot={box extend=0.5},
    font=\footnotesize, 
    every axis plot/.append style={black, fill=black!12},
    cycle list={{}},
    scale = 0.6, 
    ]
        \addplot+[boxplot prepared={
            lower whisker=0.0000, lower quartile=0.0153,
            median=0.0302,
            upper quartile=0.0816, upper whisker=0.1439,
        }] table[row sep=\\, y index=0] {%
        0.1945\\ 0.2003\\ 0.2116\\ 0.2209\\ 0.2664\\ 0.2833\\ 0.3231\\};
        \addplot+[boxplot prepared={
            lower whisker=0.0000, lower quartile=0.0776,
            median=0.2121,
            upper quartile=0.3234, upper whisker=0.5925,
        }] table[row sep=\\, y index=0] {%
        0.7160\\};
        \addplot+[boxplot prepared={
            lower whisker=0.0000, lower quartile=0.0437,
            median=0.0886,
            upper quartile=0.1283, upper whisker=0.2432,
        }] table[row sep=\\, y index=0] {%
        0.2657\\ 0.2865\\ 0.3139\\};
        \addplot+[boxplot prepared={
            lower whisker=0.0000, lower quartile=0.0511,
            median=0.0956,
            upper quartile=0.1538, upper whisker=0.2294,
        }] table[row sep=\\, y index=0] {%
        0.3198\\};
\end{axis}
\end{tikzpicture}
    \end{subfigure}
    \begin{subfigure}[t]{0.49\textwidth}
        \centering
    \begin{tikzpicture}
\begin{axis}[
    boxplot/draw direction=x, y dir=reverse,
    xlabel={$\rho^{\mathrm{mtr}}$ (log scale)},
    xmin=-3.2, xmax=1.8,
    xtick={-3,-2,-1,0,1},
    xticklabels={$10^{-3}$,$10^{-2}$,$10^{-1}$,$1$,$10$},
    ymin=0.6, ymax=12.4,
    ytick={1,2,3,4,5,6,7,8,9,10,11,12},
    yticklabels={%
      $\rho^{\mathrm{mtr}}_{\mathrm{res},\mathrm{reb}}$,%
      $\rho^{\mathrm{mtr}}_{\mathrm{res},\mathrm{loss}}$,%
      $\rho^{\mathrm{mtr}}_{\mathrm{res},\mathrm{MPD}}$,%
      $\rho^{\mathrm{mtr}}_{\mathrm{reb},\mathrm{res}}$,%
      $\rho^{\mathrm{mtr}}_{\mathrm{reb},\mathrm{loss}}$,%
      $\rho^{\mathrm{mtr}}_{\mathrm{reb},\mathrm{MPD}}$,%
      $\rho^{\mathrm{mtr}}_{\mathrm{loss},\mathrm{res}}$,%
      $\rho^{\mathrm{mtr}}_{\mathrm{loss},\mathrm{reb}}$,%
      $\rho^{\mathrm{mtr}}_{\mathrm{loss},\mathrm{MPD}}$,%
      $\rho^{\mathrm{mtr}}_{\mathrm{MPD},\mathrm{res}}$,%
      $\rho^{\mathrm{mtr}}_{\mathrm{MPD},\mathrm{reb}}$,%
      $\rho^{\mathrm{mtr}}_{\mathrm{MPD},\mathrm{loss}}$},
    yticklabel style={anchor=west, xshift=-45},
    font=\scriptsize, 
    height=11cm, width=11cm,
    every axis plot/.append style={black, fill=black!10, boxplot/box extend=0.55},
    cycle list={{}},
    scale = 0.6, transform shape
    ]

    \fill[black!5]  (axis cs:-3.2,0.5)  rectangle (axis cs:1.8,3.5);   
    \fill[black!0]  (axis cs:-3.2,3.5)  rectangle (axis cs:1.8,6.5);   
    \fill[black!5]  (axis cs:-3.2,6.5)  rectangle (axis cs:1.8,9.5);   
    \fill[black!0]  (axis cs:-3.2,9.5)  rectangle (axis cs:1.8,12.5);  

    \draw[dashed,gray] (axis cs:0,0.5) -- (axis cs:0,12.5);
    \addplot+[boxplot prepared={draw position=1, lower whisker=-1.373, lower quartile=-0.802, median=-0.442, upper quartile=-0.031, upper whisker=0.216}] table[row sep=\\,y index=0]{0.459\\ 0.486\\ 0.547\\ 0.620\\ 0.979\\};
    \addplot+[boxplot prepared={draw position=2, lower whisker=-1.292, lower quartile=-0.611, median=-0.297, upper quartile=-0.031, upper whisker=0.239}] table[row sep=\\,y index=0]{0.404\\ 0.507\\ 0.515\\ 0.558\\};
    \addplot+[boxplot prepared={draw position=3, lower whisker=-1.390, lower quartile=-0.613, median=-0.244, upper quartile=0.177, upper whisker=0.494}] table[row sep=\\,y index=0]{0.554\\ 0.573\\ 0.633\\ 0.917\\};
    \addplot+[boxplot prepared={draw position=4, lower whisker=-1.234, lower quartile=-0.016, median=0.192, upper quartile=0.639, upper whisker=0.939}] table[row sep=\\,y index=0]{0.979\\ 1.042\\ 1.058\\ 1.063\\ 1.304\\};
    \addplot+[boxplot prepared={draw position=5, lower whisker=-1.527, lower quartile=-0.064, median=0.247, upper quartile=0.511, upper whisker=0.832}] table[row sep=\\,y index=0]{0.847\\ 0.848\\ 0.870\\ 0.874\\ 0.928\\ 1.257\\ 1.382\\ 1.621\\};
    \addplot+[boxplot prepared={draw position=6, lower whisker=-1.178, lower quartile=-0.129, median=0.150, upper quartile=0.446, upper whisker=0.714}] table[row sep=\\,y index=0]{0.772\\ 0.819\\ 0.848\\ 1.258\\ 1.476\\};
    \addplot+[boxplot prepared={draw position=7, lower whisker=-1.017, lower quartile=-0.065, median=0.115, upper quartile=0.290, upper whisker=0.552}] table[row sep=\\,y index=0]{0.650\\ 1.166\\};
    \addplot+[boxplot prepared={draw position=8, lower whisker=-1.224, lower quartile=-0.647, median=-0.208, upper quartile=0.063, upper whisker=0.331}] table[row sep=\\,y index=0]{0.536\\ 0.664\\ 0.940\\};
    \addplot+[boxplot prepared={draw position=9, lower whisker=-0.892, lower quartile=-0.236, median=0.027, upper quartile=0.222, upper whisker=0.504}] table[row sep=\\,y index=0]{0.543\\ 0.795\\ 0.845\\ 0.990\\ 1.211\\ 1.410\\};
    \addplot+[boxplot prepared={draw position=10, lower whisker=-0.943, lower quartile=-0.150, median=0.142, upper quartile=0.477, upper whisker=0.669}] table[row sep=\\,y index=0]{0.881\\ 1.056\\ 1.076\\ 1.088\\};
    \addplot+[boxplot prepared={draw position=11, lower whisker=-1.399, lower quartile=-0.736, median=-0.306, upper quartile=-0.058, upper whisker=0.269}] table[row sep=\\,y index=0]{0.557\\};
    \addplot+[boxplot prepared={draw position=12, lower whisker=-2.976, lower quartile=-0.798, median=-0.084, upper quartile=0.144, upper whisker=0.504}] table[row sep=\\,y index=0]{0.892\\};
\end{axis}
\end{tikzpicture}
    \end{subfigure}
    \caption{(Left) Normalized range of each resilience criterion across instances. (Right) Distribution across instances of the per-instance median local marginal trade-off rate for each ordered pair of resilience criteria (log scale).}
    \label{fig:instances-type-tradeoff}
    \label{fig:instances-type-ranges}
\end{figure}

\medskip

Next, we explore the local marginal trade-off rates between pairs of criteria. Intuitively, the marginal trade-off rate of criterion~$j$ with respect to criterion~$i$ measures how much~$j$ must be degraded to improve~$i$ by a small amount. To quantify it, we first render the criteria comparable via a min--max normalization within each instance~$I$. We then estimate, for each solution~$z^* \in Z^*_I$, its local marginal trade-off rates within a neighborhood, and summarize these over the computed set of solutions~$Z^*_I$ by their median.
More precisely, we set for $z^* \in Z^*_I$
\begin{align*}
   \widetilde F^{i}(z^*) \;=\;
   \frac{F^{i}(z^*)-\min_{\tilde z^*\in Z^*}F^{i}(\tilde z^*)}
        {\max_{\tilde z^* \in Z^*}F^{i}(\tilde z^*)-\min_{\tilde z^*\in Z^*}F^{i}(\tilde z^*)},
\end{align*}
with the convention that a criterion that is constant on~$Z^*_I$ is set to zero and contributes no trade-off.  Then, for each solution~$z^*$, we consider its $k=\lceil \sqrt{\lvert Z^*_I\rvert} \rceil$ nearest neighbors with respect to the Euclidean distance in the normalized criterion space.
For an ordered pair of criteria~$(i,j)$, a solution $z^*\in Z^*_I$  and one if its neighbors~$\tilde z^*$ for which $\widetilde F^{i}(\tilde z^*)<\widetilde F^{i}(z^*)$ holds true, the \emph{local marginal trade-off rate} of~$j$ with respect to~$i$ is then defined by
\begin{equation}\label{eq:mtr}
   \rho^{\mathrm{mtr}}_{ij}(z^*,\tilde z^*)
   \;=\;
   \frac{\widetilde F^{j}(\tilde z^*)-\widetilde F^{j}(z^*)}
        {\widetilde F^{i}(z^*)-\widetilde F^{i}(\tilde z^*)} ,
\end{equation}
For each pair of criteria $(i,j)$, the \emph{marginal trade-off rate}~$\rho^{\mathrm{mtr}}_{ij}(I)$ of an instance~$I$ is then defined as the median among all local marginal trade-off rates of~$j$ with respect to~$i$. 

Figure~\ref{fig:instances-type-tradeoff} (Right) reports, for each ordered pair of criteria $(i,j)$, the distribution of the per-instance marginal trade-off rate $\rho^{\mathrm{mtr}}_{ij}(I)$, $I \in \mathcal{I}$.
At the median~$\bar\rho^{\mathrm{mtr}}_{ij}$ of these per-instance values, resistance exchanges most favorably against every other criterion ($\bar\rho^{\mathrm{mtr}}_{\mathrm{res},\mathrm{reb}} = 0.36$, $\bar\rho^{\mathrm{mtr}}_{\mathrm{res},\mathrm{loss}} = 0.50$, $\bar\rho^{\mathrm{mtr}}_{\mathrm{res},\mathrm{MPD}} = 0.57$).
In contrast, rebound exchanges least favorably: the three largest median rates all concern rebound and each exceeds one ($\bar\rho^{\mathrm{mtr}}_{\mathrm{reb},\mathrm{loss}} = 1.77$, $\bar\rho^{\mathrm{mtr}}_{\mathrm{reb},\mathrm{res}} = 1.56$, $\bar\rho^{\mathrm{mtr}}_{\mathrm{reb},\mathrm{MPD}} = 1.41$).
Loss exchanges unfavorably against resistance ($\bar\rho^{\mathrm{mtr}}_{\mathrm{loss},\mathrm{res}} = 1.30$) and MPD ($\bar\rho^{\mathrm{mtr}}_{\mathrm{loss},\mathrm{MPD}} = 1.06$), but favorably against rebound ($\bar\rho^{\mathrm{mtr}}_{\mathrm{loss},\mathrm{reb}} = 0.62$).
MPD exchanges unfavorably against resistance ($\bar\rho^{\mathrm{mtr}}_{\mathrm{MPD},\mathrm{res}} = 1.39$) and favorably against both loss and rebound ($\bar\rho^{\mathrm{mtr}}_{\mathrm{MPD},\mathrm{loss}} = 0.82$, $\bar\rho^{\mathrm{mtr}}_{\mathrm{MPD},\mathrm{reb}} = 0.49$). 
These median tendencies notwithstanding, for almost every ordered pair of criteria the interquartile range contains one, and for the remaining pairs the whiskers reach past it. No pair of criteria therefore exchanges uniformly across instances: for each pair, there are instances where the exchange is cheaper than proportional and instances where it is costlier. Whether a given trade-off is favorable or costly is thus not a property of the criterion pair but of the individual instance.
Moreover, the whiskers and outliers of a given pair span up to three orders of magnitude. Hence, not only the direction of a trade-off is instance-dependent but also its magnitude, i.e., even where the sign of the exchange is known, the rate itself varies across instances by orders of magnitude. 

\emph{Taken together, the ordering of the median trade-off rates indicates where improvements are cheapest to obtain, namely by improving resistance compared to other criteria, and where they are most expensive, namely by improving rebound compared to other criteria. This ordering, however, holds only at the median. Therefore, across the instances studied, the trade-off rate between any pair of criteria generally cannot be represented by a single scalar. Thus, from a decision-maker's perspective, an acceptable trade-off rate cannot be committed to in advance through a scalar index, but only in association with an instance-specific computed set of nondominated images. Prescribing a solution thus requires explicit multicriteria analyses, as only this makes available the instance-specific trade-off rates against which any acceptable rate must be weighed.}

\subsection{Results - Decisions and Resilience Criteria}
Next, we explore how the composition of the preparedness budget relates to the resilience criteria. 
To this end, we analyze whether committing a larger share of the budget to a class of preparedness decisions can be associated with better or worse values of a resilience criterion across the computed set of solutions. To quantify this, we first express each solution's budget as the shares spent on the
three classes, then measure, within each instance, the monotone association between each share and each criterion by a Spearman rank correlation, and summarize these across instances.

More precisely, for a solution~$z^*$, let $R^{\mathrm{exp}} \coloneqq c_{\alpha}^{\top}\alpha + c_{\beta}^{\top}\beta + c_{\gamma}^{\top}\gamma$, $R^{\mathrm{rei}} \coloneqq c_{\bar{f}}^{\top}\zeta + c_{\bar{s}}^{\top}\sigma + c_{\bar{\kappa}}^{\top}\eta$, and $R^{\mathrm{sup}} \coloneqq c_{\Phi}\Phi + c_{\Pi}\Pi + c_{\Psi}\Psi$ denote the budget spent on expansive, reinforcing, and supporting preparedness decisions, respectively. 
Further, let~$b_{k}(z^*) \;\coloneqq\; \frac{R^{k}(z^*)}{R^{\mathrm{prep}}(z^*)}$, $k \in \{\mathrm{exp}, \mathrm{rei}, \mathrm{sup}\}$ be the corresponding budget shares.
For each instance~$I \in \mathcal{I}$, each class~$k \in \{\mathrm{exp}, \mathrm{rei}, \mathrm{sup}\}$, and each criterion~$i \in \{\mathrm{res}, \mathrm{reb}, \mathrm{loss}, \mathrm{MPD}\}$, we then compute the Spearman rank correlation $\rhocorr_{k,i}(I)$ between the shares $\bigl(b_{k}(z^*)\bigr)_{z^* \in Z^*_I}$ and the criterion values $\bigl(F^{i}(z^*)\bigr)_{z^* \in Z^*_I}$.
Note that, since all four criteria are stated as minimization criteria, a negative correlation indicates that a larger budget share improves the criterion and a positive correlation that it degrades it. Since the three budget shares are compositional and sum to one, these associations describe the relative emphasis of the investment rather than three independent adjustable quantities.

Figure~\ref{fig:decision-spearman} reports the distribution of $\rhocorr_{k,i}(I)$, $I \in \mathcal{I}$.
\begin{figure}[tb]
    \begin{subfigure}[t]{0.49\textwidth}
    \vspace{0pt}
    \centering
    \begin{tikzpicture}
\begin{axis}[
    boxplot/draw direction=x, y dir=reverse,
    xlabel={$\rho^{\mathrm{corr}}$},
    xmin=-1.18, xmax=1.18,
    xtick={-1,-0.5,0,0.5,1},
    xticklabels={$-1$,$-0.5$,$0$,$0.5$,$1$},
    ymin=0.6, ymax=12.4,
    ytick={1,2,3,4,5,6,7,8,9,10,11,12},
    yticklabels={%
      $\rho^{\mathrm{corr}}_{\mathrm{exp},\mathrm{reb}}$,%
      $\rho^{\mathrm{corr}}_{\mathrm{exp},\mathrm{res}}$,%
      $\rho^{\mathrm{corr}}_{\mathrm{exp},\mathrm{loss}}$,%
      $\rho^{\mathrm{corr}}_{\mathrm{exp},\mathrm{MPD}}$,%
      $\rho^{\mathrm{corr}}_{\mathrm{rei},\mathrm{reb}}$,%
      $\rho^{\mathrm{corr}}_{\mathrm{rei},\mathrm{res}}$,%
      $\rho^{\mathrm{corr}}_{\mathrm{rei},\mathrm{loss}}$,%
      $\rho^{\mathrm{corr}}_{\mathrm{rei},\mathrm{MPD}}$,%
      $\rho^{\mathrm{corr}}_{\mathrm{sup},\mathrm{reb}}$,%
      $\rho^{\mathrm{corr}}_{\mathrm{sup},\mathrm{res}}$,%
      $\rho^{\mathrm{corr}}_{\mathrm{sup},\mathrm{loss}}$,%
      $\rho^{\mathrm{corr}}_{\mathrm{sup},\mathrm{MPD}}$},
    yticklabel style={anchor=west, xshift=-42},
    font=\scriptsize,
    height=11cm, width=11cm,
    every axis plot/.append style={black, fill=black!10, boxplot/box extend=0.55},
    cycle list={{}},
    scale=0.6, transform shape
    ]

    \fill[black!5]  (axis cs:-1.18,0.5)  rectangle (axis cs:1.18,4.5);   
    \fill[black!0]  (axis cs:-1.18,4.5)  rectangle (axis cs:1.18,8.5);   
    \fill[black!5]  (axis cs:-1.18,8.5)  rectangle (axis cs:1.18,12.5);  

    \draw[dashed,gray] (axis cs:0,0.5) -- (axis cs:0,12.5);

    \addplot+[boxplot prepared={draw position=1, lower whisker=-1.000, lower quartile=-0.976, median=-0.897, upper quartile=-0.761, upper whisker=-0.561}] table[row sep=\\,y index=0]{-0.434\\ -0.416\\ -0.212\\ -0.150\\ -0.130\\ 0.200\\ 0.373\\ 0.485\\ 1.000\\};
    \addplot+[boxplot prepared={draw position=2, lower whisker=-1.000, lower quartile=-0.118, median=0.169, upper quartile=0.480, upper whisker=1.000}] coordinates {};
    \addplot+[boxplot prepared={draw position=3, lower whisker=-1.000, lower quartile=-0.796, median=-0.333, upper quartile=0.397, upper whisker=1.000}] coordinates {};
    \addplot+[boxplot prepared={draw position=4, lower whisker=0.045, lower quartile=0.485, median=0.812, upper quartile=0.961, upper whisker=1.000}] table[row sep=\\,y index=0]{-1.000\\ -0.772\\ -0.690\\ -0.313\\};
    \addplot+[boxplot prepared={draw position=5, lower whisker=-0.024, lower quartile=0.463, median=0.761, upper quartile=0.871, upper whisker=1.000}] table[row sep=\\,y index=0]{-1.000\\ -0.500\\ -0.484\\ -0.311\\};
    \addplot+[boxplot prepared={draw position=6, lower whisker=-0.746, lower quartile=-0.070, median=0.284, upper quartile=0.498, upper whisker=1.000}] coordinates {};
    \addplot+[boxplot prepared={draw position=7, lower whisker=-1.000, lower quartile=-0.738, median=0.182, upper quartile=0.596, upper whisker=0.974}] coordinates {};
    \addplot+[boxplot prepared={draw position=8, lower whisker=-1.000, lower quartile=-0.941, median=-0.810, upper quartile=-0.596, upper whisker=-0.254}] table[row sep=\\,y index=0]{0.111\\ 0.448\\ 0.688\\};
    \addplot+[boxplot prepared={draw position=9, lower whisker=-1.000, lower quartile=-0.296, median=0.373, upper quartile=0.862, upper whisker=1.000}] coordinates {};
    \addplot+[boxplot prepared={draw position=10, lower whisker=-1.000, lower quartile=-0.801, median=-0.533, upper quartile=-0.296, upper whisker=0.336}] table[row sep=\\,y index=0]{0.628\\ 0.638\\ 0.811\\ 1.000\\ 1.000\\};
    \addplot+[boxplot prepared={draw position=11, lower whisker=-0.731, lower quartile=0.139, median=0.600, upper quartile=0.817, upper whisker=1.000}] table[row sep=\\,y index=0]{-1.000\\ -1.000\\ -0.947\\ -0.941\\};
    \addplot+[boxplot prepared={draw position=12, lower whisker=-1.000, lower quartile=-0.744, median=-0.062, upper quartile=0.596, upper whisker=1.000}] coordinates {};
\end{axis}
\end{tikzpicture}
    \end{subfigure}
    \begin{subfigure}[t]{0.49\textwidth}
    \vspace{0pt}
    \centering
    \resizebox{\textwidth}{!}{
    \begin{tabular}{llrrr}
    \toprule
    Class & Criterion & Median $\rho$ & Improve (\%) & Degrade (\%) \\
    \midrule 
    Expansive & Rebound      & $-0.90$ & 92 &  8 \\
              & Resistance   & $+0.17$ & 38 & 62 \\
              & Loss         & $-0.33$ & 65 & 33 \\
              & MPD          & $+0.81$ &  8 & 92 \\
    \midrule
    Reinforcements & Rebound      & $+0.76$ &  9 & 89 \\
                   & Resistance   & $+0.28$ & 26 & 74 \\
                   & Loss         & $+0.18$ & 43 & 55 \\
                   & MPD          & $-0.81$ & 94 &  6 \\
    \midrule
    Supporting & Rebound      & $+0.37$ & 32 & 68 \\
               & Resistance   & $-0.53$ & 84 & 16 \\
               & Loss         & $+0.60$ & 23 & 75 \\
               & MPD          & $-0.06$ & 53 & 47 \\
    \bottomrule
    \end{tabular}%
    
    }\vspace{1.1cm}
    \end{subfigure}
    \caption{(Left) Distribution across instances of the per-instance Spearman rank correlation between the budget share of each class of preparedness decisions and each resilience criterion (all criteria minimized; negative values indicate that a larger share improves the criterion).
    (Right) Per-instance Spearman rank correlation between the budget share of each class of preparedness decisions (expansive, reinforcements, supporting) and each resilience criterion, aggregated across all instances. All criteria are formulated as minimization objectives, so a negative $\rho$ indicates that a larger budget share improves the criterion. ``Improve'' and ``Degrade'' report the share of instances with $\rho<0$ and $\rho>0$, respectively; the two shares need not sum to $100\,\%$, as instances with a vanishing coefficient are counted in neither. }
    \label{fig:decision-spearman}
    \label{tab:decision-spearman}
\end{figure}
Opposing roles of expansive and reinforcing preparedness decisions can be identified. A larger share invested in expansive preparedness decisions is associated with a better rebound in $92\,\%$ of instances (median $\bar\rho^{\mathrm{corr}}_{\mathrm{exp},\mathrm{reb}} = -0.90$) but with a worse MPD in $92\,\%$ ($\bar\rho^{\mathrm{corr}}_{\mathrm{exp},\mathrm{MPD}} = 0.81$). In contrast, a larger share invested in reinforcing preparedness decisions is associated with a better MPD in $94\,\%$ of instances ($\bar\rho^{\mathrm{corr}}_{\mathrm{rei},\mathrm{MPD}} = -0.81$) but with a worse rebound in $89\,\%$ ($\bar\rho^{\mathrm{corr}}_{\mathrm{rei},\mathrm{reb}} = 0.76$). Hence, across the efficient sets computed for the instances, expansive preparedness decisions tend to improve rebound at the cost of MPD, whereas reinforcing preparedness decisions tend to improve MPD at the cost of rebound.
Further, a larger share invested in supporting preparedness decisions is associated with better resistance in $84\,\%$ of instances ($\bar\rho^{\mathrm{corr}}_{\mathrm{sup},\mathrm{res}} = -0.53$), whereas a larger share invested in reinforcing preparedness decisions is associated with worse one in $74\,\%$ ($\bar\rho^{\mathrm{corr}}_{\mathrm{rei},\mathrm{res}} = 0.28$). Expansive preparedness decisions show no consistent relation to resistance ($\bar\rho^{\mathrm{corr}}_{\mathrm{exp},\mathrm{res}} = 0.17$). Supporting preparedness decisions thus emerge as the class most closely tied to resistance, although a larger supporting share is at the same time associated with a worse loss in $75\,\%$ of instances ($\bar\rho^{\mathrm{corr}}_{\mathrm{sup},\mathrm{loss}} = 0.60$). A larger expansive share, by contrast, is associated with a better loss in $65\,\%$ of instances ($\bar\rho^{\mathrm{corr}}_{\mathrm{exp},\mathrm{loss}} = -0.33$), while neither reinforcing preparedness decisions and loss nor supporting preparedness decisions and MPD exhibit a consistent direction ($\lvert\bar\rho^{\mathrm{corr}}\rvert \le 0.18$).

\emph{On the set of computed solutions, different preparedness instruments serve different resilience criteria: expansive decisions serve in median rebound, reinforcing decisions serve in median MPD, and supporting decisions serve in median the resistance phase. Loss is the exception, as it is tied to no single class but associated with them in opposing directions, so that it cannot be steered through the budget composition in one consistent direction. More precisely, budget compositions that prioritize rebound, resistance, or MPD can differ substantially but improve loss to a similar extent. Hence, composing the preparedness budget with regard to loss alone leaves the genuine conflicts among these criteria unaddressed. As a consequence, the choice of budget composition across the three preparedness classes drives, in particular, the trade-offs among rebound, resistance, and MPD. In contrast, setting a target value for loss only leaves considerable flexibility in budget composition.}
 

\medskip

Next, we analyze the point in time at which supporting capacities are deployed and relate it to the criteria of resilience in time, rebound and resistance. To this end we consider, for a solution~$z^*$, the \emph{resistance phase} $\mathcal{T}^{\mathrm{resist}}(z^*)$, the time window from the event until the first performance drop; the \emph{absorption phase} $\mathcal{T}^{\mathrm{absorpt}}(z^*)$, from the first drop to the time step at which the maximum performance drop is attained; and the \emph{recovery phase} $\mathcal{T}^{\mathrm{recov}}(z^*)$, from the maximum drop to full recovery. For each event~$E \in \mathcal{E}$ and phase~$p \in \{\mathrm{resist}, \mathrm{absorpt}, \mathrm{recov}\}$, let $\mathcal{T}^{p,E}(z^*)\subseteq\mathcal{T}^{E}$ collect the time steps of event~$E$ falling in phase~$p$. 
Weighting each event by $w^{E}$, let the \emph{phase share}~$b^{\mathrm{cap}}_{p}(z^*)$ of a capacity type~$\mathrm{cap} \in \{\mathrm{tr}, \mathrm{su}, \mathrm{st}\}$ be its event-weighted activation released during phase~$p$ as a fraction of its total event-weighted activation over~$\mathcal{T}^{E}(z^*)$. 
Per instance~$I \in \mathcal{I}$, we report the median~$\bar{b}^{\mathrm{cap}}_{p}(I)$ of the phase shares of each capacity type~$\mathrm{cap} \in \{\mathrm{tr}, \mathrm{su}, \mathrm{st}\}$ over the computed set of solutions~$Z^{*}_I$.
Based on that, for each instance~$I \in \mathcal{I}$, capacity type~$\mathrm{cap} \in \{\mathrm{tr}, \mathrm{su}, \mathrm{st}\}$, phase~$p \in \{\mathrm{resist}, \mathrm{absorpt}, \mathrm{recov}\}$, and criterion~$i \in \{\mathrm{reb}, \mathrm{res}\}$, we compute the Spearman rank correlation $\rho^{\mathrm{corr}}_{p,i}(I)$ between the phase shares $\bigl(b^{\mathrm{cap}}_{p}(z^*)\bigr)_{z^* \in Z^*_I}$ and the criterion values $\bigl(F^{i}(z^*)\bigr)_{z^* \in Z^*_I}$. As before, all criteria are minimized, so a negative coefficient indicates a longer resistance phase or a shorter rebound duration. Across instances, we summarize each by the median~$\bar\rho^{\mathrm{corr}}_{p,i}$.
\begin{figure}[tb]
    \centering
        \begin{subfigure}[t]{0.49\textwidth}
        \begin{tikzpicture}
        \begin{axis}[
            boxplot/draw direction=y,
            ylabel={$\bar{b}^{\mathrm{cap}}_{p}(I)$},
            ymin=-0.05, ymax=1.05, ytick={0,0.2,0.4,0.6,0.8,1},
            xmin=0.3, xmax=11.7, xtick={2,6,10},
            xticklabels={Res.\ Phase, Abs.\ Phase, Rec.\ Phase},
            xtick style={draw=none},
            xlabel = {\phantom{Phase}},
            font=\footnotesize, height=11cm, width=12cm,
            every axis plot/.append style={black, boxplot/box extend=0.7},
            cycle list={{}},
            legend cell align=left, legend pos=north east,
            legend style={font=\footnotesize,draw=none,fill=none},
            scale=0.55, transform shape
            ]
            \addplot+[forget plot, fill=black!45, boxplot prepared={draw position=1, lower whisker=0.122, lower quartile=0.370, median=0.484, upper quartile=0.641, upper whisker=0.803}] coordinates {};
            \addplot+[forget plot, fill=black!25, boxplot prepared={draw position=2, lower whisker=0.012, lower quartile=0.106, median=0.182, upper quartile=0.290, upper whisker=0.428}] table[row sep=\\,y index=0]{0.624\\};
            \addplot+[forget plot, fill=black!10, boxplot prepared={draw position=3, lower whisker=0.137, lower quartile=0.439, median=0.576, upper quartile=0.690, upper whisker=1.000}] coordinates {};
            \addplot+[forget plot, fill=black!45, boxplot prepared={draw position=5, lower whisker=0.086, lower quartile=0.281, median=0.375, upper quartile=0.473, upper whisker=0.631}] coordinates {};
            \addplot+[forget plot, fill=black!25, boxplot prepared={draw position=6, lower whisker=0.250, lower quartile=0.437, median=0.575, upper quartile=0.725, upper whisker=0.841}] coordinates {};
            \addplot+[forget plot, fill=black!10, boxplot prepared={draw position=7, lower whisker=0.000, lower quartile=0.214, median=0.323, upper quartile=0.423, upper whisker=0.682}] table[row sep=\\,y index=0]{0.749\\};
            \addplot+[forget plot, fill=black!45, boxplot prepared={draw position=9, lower whisker=0.000, lower quartile=0.063, median=0.110, upper quartile=0.175, upper whisker=0.337}] table[row sep=\\,y index=0]{0.355\\};
            \addplot+[forget plot, fill=black!25, boxplot prepared={draw position=10, lower whisker=0.000, lower quartile=0.053, median=0.132, upper quartile=0.306, upper whisker=0.583}] coordinates {};
            \addplot+[forget plot, fill=black!10, boxplot prepared={draw position=11, lower whisker=0.000, lower quartile=0.014, median=0.086, upper quartile=0.147, upper whisker=0.309}] table[row sep=\\,y index=0]{0.403\\ 0.448\\};
        \end{axis}
        \end{tikzpicture}
    \end{subfigure}
    \begin{subfigure}[t]{0.49\textwidth}
    \begin{tikzpicture}
\begin{axis}[
    boxplot/draw direction=x, y dir=reverse,
    xlabel={$\rho^{\mathrm{corr}}$},
    xmin=-1.18, xmax=1.18, xtick={-1,-0.5,0,0.5,1},
    xticklabels={$-1$,$-0.5$,$0$,$0.5$,$1$},
    ymin=0.3, ymax=23.7, ytick={2,6,10,14,18,22},
    yticklabels={$\rhocorract_{\mathrm{resist},\mathrm{reb}}$, $\rhocorract_{\mathrm{resist},\mathrm{res}}$, $\rhocorract_{\mathrm{absorpt},\mathrm{reb}}$, $\rhocorract_{\mathrm{absorpt},\mathrm{res}}$, $\rhocorract_{\mathrm{recov},\mathrm{reb}}$, $\rhocorract_{\mathrm{recov},\mathrm{res}}$},
    ytick style={draw=none},
    font=\footnotesize, height=11cm, width=12cm,
    every axis plot/.append style={black, boxplot/box extend=0.7},
    cycle list={{}},
    legend cell align=left, legend columns=3,
    legend style={font=\footnotesize,draw=none,fill=none,
        at={(0.5,1.02)},anchor=south,/tikz/every even column/.append style={column sep=8pt}},
    scale=0.55, transform shape
    ]
    \draw[dashed,gray] (axis cs:0,0.3) -- (axis cs:0,23.7);
    \addplot+[forget plot, fill=black!45, boxplot prepared={draw position=1, lower whisker=-1.000, lower quartile=-0.635, median=-0.220, upper quartile=0.208, upper whisker=1.000}] coordinates {};
    \addplot+[forget plot, fill=black!25, boxplot prepared={draw position=2, lower whisker=-1.000, lower quartile=-0.474, median=0.633, upper quartile=0.968, upper whisker=1.000}] coordinates {};
    \addplot+[forget plot, fill=black!10, boxplot prepared={draw position=3, lower whisker=-1.000, lower quartile=-0.069, median=0.357, upper quartile=0.584, upper whisker=1.000}] coordinates {};
    \addplot+[forget plot, fill=black!45, boxplot prepared={draw position=5, lower whisker=-1.000, lower quartile=-0.720, median=-0.399, upper quartile=-0.036, upper whisker=0.949}] coordinates {};
    \addplot+[forget plot, fill=black!25, boxplot prepared={draw position=6, lower whisker=-1.000, lower quartile=-1.000, median=-0.879, upper quartile=-0.200, upper whisker=0.667}] table[row sep=\\,y index=0]{1.000\\};
    \addplot+[forget plot, fill=black!10, boxplot prepared={draw position=7, lower whisker=-1.000, lower quartile=-0.651, median=-0.321, upper quartile=0.002, upper whisker=0.866}] table[row sep=\\,y index=0]{1.000\\};
    \addplot+[forget plot, fill=black!45, boxplot prepared={draw position=9, lower whisker=-1.000, lower quartile=-0.126, median=0.300, upper quartile=0.573, upper whisker=1.000}] coordinates {};
    \addplot+[forget plot, fill=black!25, boxplot prepared={draw position=10, lower whisker=-1.000, lower quartile=-0.405, median=0.457, upper quartile=0.909, upper whisker=1.000}] coordinates {};
    \addplot+[forget plot, fill=black!10, boxplot prepared={draw position=11, lower whisker=-1.000, lower quartile=-0.450, median=-0.200, upper quartile=0.107, upper whisker=0.717}] table[row sep=\\,y index=0]{1.000\\};
    \addplot+[forget plot, fill=black!45, boxplot prepared={draw position=13, lower whisker=-0.496, lower quartile=0.122, median=0.392, upper quartile=0.681, upper whisker=1.000}] coordinates {};
    \addplot+[forget plot, fill=black!25, boxplot prepared={draw position=14, lower whisker=-1.000, lower quartile=-0.400, median=0.435, upper quartile=1.000, upper whisker=1.000}] coordinates {};
    \addplot+[forget plot, fill=black!10, boxplot prepared={draw position=15, lower whisker=-0.559, lower quartile=0.085, median=0.405, upper quartile=0.657, upper whisker=1.000}] table[row sep=\\,y index=0]{-1.000\\ -0.778\\};
    \addplot+[forget plot, fill=black!45, boxplot prepared={draw position=17, lower whisker=-1.000, lower quartile=-0.259, median=0.123, upper quartile=0.432, upper whisker=1.000}] coordinates {};
    \addplot+[forget plot, fill=black!25, boxplot prepared={draw position=18, lower whisker=-1.000, lower quartile=-0.753, median=-0.500, upper quartile=-0.065, upper whisker=0.500}] table[row sep=\\,y index=0]{1.000\\};
    \addplot+[forget plot, fill=black!10, boxplot prepared={draw position=19, lower whisker=-1.000, lower quartile=-0.816, median=-0.231, upper quartile=0.022, upper whisker=1.000}] coordinates {};
    \addplot+[forget plot, fill=black!45, boxplot prepared={draw position=21, lower whisker=-1.000, lower quartile=-0.400, median=-0.128, upper quartile=0.219, upper whisker=1.000}] coordinates {};
    \addplot+[forget plot, fill=black!25, boxplot prepared={draw position=22, lower whisker=-1.000, lower quartile=-0.950, median=-0.258, upper quartile=0.634, upper whisker=1.000}] coordinates {};
    \addplot+[forget plot, fill=black!10, boxplot prepared={draw position=23, lower whisker=-1.000, lower quartile=-0.290, median=-0.057, upper quartile=0.227, upper whisker=1.000}] coordinates {};
    \end{axis}
    \end{tikzpicture}
       \end{subfigure}
    \caption{ 
    (Left)~Distribution across instances of the per-instance median phase shares $\bar b^{\mathrm{cap}}_{p}$, $\mathrm{cap} \in \{\mathrm{tr}, \mathrm{su}, \mathrm{st}\}$, i.e.\ the fraction of each capacity type's deployment released during the resistance, absorption, and recovery phase. The three phase shares of a given capacity type sum to one for each solution; the reported medians need not.
    (Right)~Distribution across instances of the per-instance Spearman rank correlation $\rhocorract_{p,i}$ between the phase share of each capacity type in phase and the resilience-in-time criteria $i\in\{\mathrm{reb}, \mathrm{res}\}$. The box shading identifies the capacity type: \swatch{black!45}~transit, \swatch{black!25}~supply, \swatch{black!10}~storage.}
    \label{fig:supporting-phase-shares}
    \label{fig:supporting-phase-corr}
\end{figure}

Figure~\ref{fig:supporting-phase-shares} (Left) reports the distribution of $\bar{b}^{\mathrm{cap}}_{p}(I)$, $I \in \mathcal{I}$. Transit and storage capacities are deployed predominantly during the resistance phase (median $\bar{b}^{\mathrm{tr}}_{\mathrm{resist}} = 0.48$ and $\bar{b}^{\mathrm{st}}_{\mathrm{resist}} = 0.57$), whereas supply capacities reverse this ordering and are deployed mainly during the absorption phase ($\bar{b}^{\mathrm{su}}_{\mathrm{absorpt}} = 0.57$, against $\bar{b}^{\mathrm{su}}_{\mathrm{resist}} = 0.18$ in the resistance phase). Supporting investment is thus not a single instrument released at one point in time: transit and storage capacities act early, supply capacities later.

Figure~\ref{fig:supporting-phase-corr} (Right) reports the distribution of the Spearman rank correlations $\rho^{\mathrm{corr}}_{p,i}(I)$, $I \in \mathcal{I}$. For the resistance criterion, the association is directionally robust: for every capacity type a larger share deployed during the resistance phase is associated with a longer resistance phase ($\bar\rho^{\mathrm{corr}}_{\mathrm{resist},\mathrm{res}} = -0.88$ for supply, $\bar\rho^{\mathrm{corr}}_{\mathrm{resist},\mathrm{res}} = -0.40$ for transit, $\bar\rho^{\mathrm{corr}}_{\mathrm{resist},\mathrm{res}} = -0.32$ for storage), and the corresponding interquartile ranges lie almost entirely below zero, so the direction holds on the large majority of instances rather than only at the median. For the rebound criterion the relevant phase differs by capacity type. For transit capacities an early deployment also shortens the rebound duration ($\bar\rho^{\mathrm{corr}}_{\mathrm{resisst},\mathrm{reb}} = -0.22$), improving both criteria at once; for supply and storage capacities a larger resistance phase share instead lengthens the rebound duration ($\bar\rho^{\mathrm{corr}}_{\mathrm{resist},\mathrm{reb}} = 0.63$ and $\bar\rho^{\mathrm{corr}}_{\mathrm{resist},\mathrm{reb}} = 0.36$, respectively), and it is a deployment during the recovery phase that shortens it ($\bar\rho^{\mathrm{corr}}_{\mathrm{recov},\mathrm{reb}} = -0.50$ and $\bar\rho^{\mathrm{corr}}_{\mathrm{recov},\mathrm{reb}} = -0.23$, respectively). These rebound associations are markedly less consistent than the resistance ones: their interquartile ranges typically straddle zero, so that even where the median points to a lengthening or a shortening, the sign is not constant across instances and no timing policy improves the rebound duration uniformly. The timing of supporting deployment is thus subject to the same instance-dependence found for the marginal trade-off rates. In particular, for supply and storage capacities the displacement toward the resistance phase that lengthens the resistance phase also tends to lengthen the rebound duration; but the extent of this effect, and even the instances on which its direction holds, cannot be fixed a priori.

\emph{The conflict between resistance and rebound reappears as a question of when supporting capacities are activated. 
For transit reserves, the two criteria do not conflict in timing, since an early release tends to lengthen the resistance phase and shorten the rebound duration alike. For supply and storage reserves, however, no single release policy serves both criteria, as the early release tends to lengthen the resistance phase and lengthen the rebound duration. The timing of supply and storage activation is therefore itself a decision over which the two resilience criteria in time trade off. Consequently, a preparedness plan that fixes only how much supporting capacity to invest in, and not when to release it, leaves this conflict unresolved.
}




\section{Conclusion}\label{sec:conclusion}

In this work, we proposed a unified set of resilience criteria encompassing rebound, resistance, loss, and maximum performance degradation (MPD), alongside the complementary conditions of preparedness and effectiveness. We integrated these criteria into a two-stage multicriteria optimization formulation on flows over time, allowing the trade-offs between potentially conflicting resilience objectives to be examined explicitly. We established a decomposition into a family of bicriteria linear programs and showed that the number of efficient temporal resilience profiles can grow superpolynomially in the encoding length, precluding a polynomial-size enumeration of the complete nondominated set in general. Based on this decomposition, we developed an enclosure-based heuristic for approximating the nondominated set.

On the benchmark instances studied and across the computed sets of solutions, rebound varies more substantially than resistance, loss, and MPD, giving recovery-time targets the strongest discriminatory power among candidate solutions. Moreover, the magnitude and, in some cases, the direction of the trade-offs vary across instances. This supports an explicit, instance-specific multicriteria analysis rather than representing resilience through a single universal scalar. The conflicts also affect preparedness decisions: expansive decisions are primarily associated with improved rebound, reinforcing decisions with improved MPD, and supporting decisions with an extended resistance phase. Loss is less strongly associated with any single decision class. Finally, the conflict between resistance and rebound concerns not only how much supporting capacity is prepared, but also when it is activated. Preparedness strategies should therefore specify both the composition and the timing of capacity deployment.

Taken together, the results show that resilience can comprise several potentially contradictory goals whose relevance and trade-offs must be assessed for the system at hand. The computational findings are based on synthetic, benchmark-derived instances and on the solution sets returned by the first two phases of the proposed heuristic. Evaluating the complete procedure, comparing its results with exact nondominated sets on smaller instances, and validating the identified relationships in empirical applications therefore constitute important directions for future research. Further extensions may incorporate perishable demand, cascading impacts, cost-based and multi-commodity flows, and interdependent networks. The framework could also be used to investigate the price of resilience, in line with related work on the price of robustness~\citep{bertsimas_price_2004}.

\section*{Code and Data Disclosure}\label{sec:Code and Data Disclosure}

\noindent
The code and data to support the numerical experiments in this paper can be found at \url{https://gitlab.kit.edu/stephan.helfrich/resilient-network-flows} 

\section*{Declaration of Interests}

\noindent
The authors declare that they have no known competing financial interests or personal relationships that could have appeared to influence the work reported in this paper.

\section*{Declaration of Generative AI in Scientific Writing}

\noindent
During the preparation of this work, the author(s) used Claude in order to improve  readability and grammar. After using this tool, the author(s) reviewed and edited the content as needed and take(s) full responsibility for the content of the published article.

\section*{Author Contributions}

\vspace{-6pt}
\noindent
\emph{Stephan Helfrich}: Conceptualization, Methodology, Software, Data Curation, Writing - Original Draft, Writing - Review \& Editing, Visualization\\
\emph{Gabriela Ciolacu}: Conceptualization, Methodology, Data Curation, Writing - Original Draft, Writing - Review \& Editing, Visualization\\
\emph{Jan Boeckmann}: Conceptualization, Methodology, Software, Writing - Original Draft, Writing - Review \& Editing, Visualization\\
\emph{Emilia Grass}: Project Administration, Funding Acquisition, Writing - Review \& Editing

\section*{Acknowledgments}

\vspace{-6pt}
\noindent
The authors thank Eric Alexander Hoffeins and Manuel Mechnich for their assistance in implementing and carrying out the numerical study.
Further, the authors acknowledge support by the state of Baden-W\"urttemberg through bwHPC.

\bibliography{references}

\appendix
\section{Complete Model Formulation}\label{app:complete-model-formulation}
We collect the full statement of the two-stage multicriteria optimization problem~\eqref{eq:two-stage-problem} in the notation of Sections~\ref{sec:problem-formulation}--\ref{sec:objectives}. The system is a directed network $G=(V,A)$ over the discrete time horizon $\mathcal{T}=\{0,1,\ldots,T\}$, subject to a finite set of adverse events $\mathcal{E}$; each event $E\in\mathcal{E}$ has a known start time $\theta^E\in\mathcal{T}$, duration $\ell^E$, weight $w^E$, and residual horizon $\mathcal{T}^E=\{\theta^E,\ldots,T\}$. To guarantee feasibility of the reactive flow, the vertex set is augmented by an artificial disposal vertex $v^{\mathrm{disp}}$ of infinite storage capacity, and every vertex is connected to it by an uncapacitated, zero-transit-time arc; flow routed to $v^{\mathrm{disp}}$ represents discarded flow. The nominal and reactive excess $\textup{ex}_v(\theta)$ and $\textup{ex}^E_v(\theta)$ are as defined in Section~\ref{sec:flow-over-time}. The parameters and decision variables are summarized in Tables~\ref{app:tab:params} and~\ref{app:tab:vars}.

\begin{small}
\begin{longtable}{@{}p{0.24\linewidth} p{0.70\linewidth}@{}}
\caption{Parameters of the formulation.}
\label{app:tab:params}\\
\toprule
Symbol & Description \\
\midrule
\endfirsthead
\toprule
Symbol & Description \\
\midrule
\endhead
\bottomrule
\endfoot
$T$ & Length of the time horizon $\mathcal{T}=\{0,\ldots,T\}$. \\
$\tau_a\in\NN$ & Transit time of arc $a\in A$. \\
$\bar f_a(\theta)$ & Nominal transit capacity of arc $a$ at time $\theta$. \\
$\bar s_v(\theta)$ & Nominal supply capacity at vertex $v$. \\
$\bar d_v(\theta)$ & Nominal demand at vertex $v$. \\
$\bar\kappa_v(\theta)$ & Nominal storage capacity at vertex $v$. \\
$\bar f^{\mathrm{exp}}_a,\ \bar s^{\mathrm{exp}}_v,\ \bar\kappa^{\mathrm{exp}}_v$ & Upper bounds on expansive arc / supply / storage expansion. \\
$\bar f^{\mathrm{sup}}_a,\ \bar s^{\mathrm{sup}}_v,\ \bar\kappa^{\mathrm{sup}}_v$ & Upper bounds on supporting arc / supply / storage reserves. \\
$\Delta^E\bar f_a(\theta)\le 0$ & Transit-capacity reduction under event $E$. \\
$\Delta^E\bar s_v(\theta)\le 0$ & Supply reduction under event $E$. \\
$\Delta^E\bar d_v(\theta)\ge 0$ & Demand surge under event $E$. \\
$\Delta^E\bar\kappa_v(\theta)\le 0$ & Storage-capacity reduction under event $E$. \\
$\theta^E,\ \ell^E$ & Start time and duration of event $E$. \\
$w^E\ge 0$ & Normalized weight of event $E$ ($\sum_{E\in\mathcal{E}} w^E=1$). \\
$c_\alpha,\ c_\beta,\ c_\gamma$ & Cost coefficients of expansive arc / supply / storage decisions. \\
$c_{\bar f},\ c_{\bar s},\ c_{\bar\kappa}$ & Cost coefficients of reinforcing arc / supply / storage decisions. \\
$c_\Phi,\ c_\Pi,\ c_\Psi$ & Cost coefficients of supporting arc / supply / storage budgets. \\
$B^{\mathrm{prep}}$ & Preparedness budget. \\
$B^{\mathrm{prep,\,str}}$ & Bound(s) of the structural preparedness restrictions $H^{\mathrm{prep}}$. \\
$x^{\mathrm{ref}}$ & Reference nominal flow over time. \\
$B^{\mathrm{effect}}$ & Effectiveness tolerance(s) for $H^{\mathrm{effect}}$. \\
\end{longtable}
\begin{longtable}{@{}p{0.20\linewidth} p{0.22\linewidth} p{0.50\linewidth}@{}}
\caption{Decision variables of the formulation.}
\label{app:tab:vars}\\
\toprule
Symbol & Domain & Description \\
\midrule
\endfirsthead
\toprule
Symbol & Domain & Description \\
\midrule
\endhead
\bottomrule
\endfoot
\multicolumn{3}{@{}l}{\emph{First stage --- nominal flow over time}}\\
$f_a(\theta)$ & $\ge 0$ & Nominal flow entering arc $a$ at time $\theta$. \\
$s_v(\theta)$ & $\ge 0$ & Nominal supply generated at vertex $v$. \\
$d_v(\theta)$ & $\ge 0$ & Nominal met demand at vertex $v$. \\
$u_v(\theta)$ & free & Nominal unmet demand at vertex $v$. \\
\midrule
\multicolumn{3}{@{}l}{\emph{First stage --- preparedness}}\\
$\alpha_a$ & $[0,1]$ & Expansive arc-capacity decision. \\
$\beta_v$ & $[0,1]$ & Expansive supply-capacity decision. \\
$\gamma_v$ & $[0,1]$ & Expansive storage-capacity decision. \\
$\zeta_a$ & $[0,1]$ & Reinforcing arc decision. \\
$\sigma_v$ & $[0,1]$ & Reinforcing supply decision. \\
$\eta_v$ & $[0,1]$ & Reinforcing storage decision. \\
$\Phi,\ \Pi,\ \Psi$ & $\ge 0$ & Supporting arc / supply / storage budgets. \\
\midrule
\multicolumn{3}{@{}l}{\emph{Second stage --- reactive flow (per event $E$, $\theta\in\mathcal{T}^E$)}}\\
$f^E_a(\theta)$ & $\ge 0$ & Reactive flow entering arc $a$. \\
$s^E_v(\theta)$ & $\ge 0$ & Reactive supply generated at vertex $v$. \\
$d^E_v(\theta)$ & $\ge 0$ & Reactive met demand at vertex $v$. \\
$u^E_v(\theta)$ & free & Reactive unmet demand at vertex $v$. \\
$\phi^E_a(\theta)$ & $\ge 0$ & Activation of the supporting arc reserve. \\
$\pi^E_v(\theta)$ & $\ge 0$ & Activation of the supporting supply reserve. \\
$\psi^E_v(\theta)$ & $\ge 0$ & Activation of the supporting storage reserve. \\
\midrule
\multicolumn{3}{@{}l}{\emph{Objective auxiliaries (per event $E$, $\theta\in\mathcal{T}^E$)}}\\
$r^{\mathrm{reb},E}(\theta)$ & $\{0,1\}$ & Rebound indicator. \\
$r^{\mathrm{res},E}(\theta)$ & $\{0,1\}$ & Resistance indicator. \\
$r^{\mathrm{loss},E}(\theta)$ & $\ge 0$ & Loss variable. \\
$r^{\mathrm{MPD},E}$ & $\ge 0$ & MPD variable. \\
\end{longtable}
\end{small}
\subsection{First-Stage Decision Space}\label{app:first-stage}
\begin{align}
  f_a(\theta) &\le \bar f_a(\theta) + \bar f^{\mathrm{exp}}_a\,\alpha_a, & a&\in A,\ \theta\in\mathcal{T}, \label{app:eq:f-cap}\\
  s_v(\theta) &\le \bar s_v(\theta) + \bar s^{\mathrm{exp}}_v\,\beta_v, & v&\in V,\ \theta\in\mathcal{T}, \label{app:eq:s-cap}\\
  \textstyle\sum_{\xi=0}^{\theta} d_v(\xi) &\le \textstyle\sum_{\xi=0}^{\theta} \bar d_v(\xi), & v&\in V,\ \theta\in\mathcal{T}, \label{app:eq:d-ub}\\
  \textstyle\sum_{\xi=0}^{\theta} d_v(\xi) + \sum_{\xi=0}^{\theta} u_v(\xi) &= \textstyle\sum_{\xi=0}^{\theta} \bar d_v(\xi), & v&\in V,\ \theta\in\mathcal{T}, \label{app:eq:d-bal}\\
  0 \le \textup{ex}_v(\theta) &\le \bar\kappa_v(\theta) + \bar\kappa^{\mathrm{exp}}_v\,\gamma_v, & v&\in V,\ \theta\in\mathcal{T}, \label{app:eq:ex-cap}
\end{align}
\begin{align}
  &c_\alpha^{\top}\alpha + c_\beta^{\top}\beta + c_\gamma^{\top}\gamma + c_{\bar f}^{\top}\zeta 
  + c_{\bar s}^{\top}\sigma + c_{\bar\kappa}^{\top}\eta + c_\Phi\Phi + c_\Pi\Pi + c_\Psi\Psi \le B^{\mathrm{prep}}, \label{app:eq:budget}\\
  &H^{\mathrm{effect}}(x,x^{\mathrm{ref}}) \le B^{\mathrm{effect}}, \label{app:eq:eff}
\end{align}
 
\subsection{Second-Stage Decision Spaces}\label{app:second-stage}
\begin{align}
  f^E_a(\theta) &\le \bigl(\bar f_a(\theta) + \bar f^{\mathrm{exp}}_a\,\alpha_a\bigr) + (1-\zeta_a)\,\Delta^E\bar f_a(\theta) + \phi^E_a(\theta),\qquad a\in A,\ \theta\in\mathcal{T}^E, \label{app:eq:fE-cap}\\
  s^E_v(\theta) &\le \bigl(\bar s_v(\theta) + \bar s^{\mathrm{exp}}_v\,\beta_v\bigr) + (1-\sigma_v)\,\Delta^E\bar s_v(\theta) + \pi^E_v(\theta), \qquad v\in V,\ \theta\in\mathcal{T}^E, \label{app:eq:sE-cap}\\
  0 \le \textup{ex}^E_v(\theta) &\le \bigl(\bar\kappa_v(\theta) + \bar\kappa^{\mathrm{exp}}_v\,\gamma_v\bigr) + (1-\eta_v)\,\Delta^E\bar\kappa_v(\theta) + \psi^E_v(\theta),  \qquad  v\in V,\ \theta\in\mathcal{T}^E, \label{app:eq:exE-cap}
\end{align}
\begin{align}
  &\textstyle\sum_{\xi=0}^{\theta^E-1} d_v(\xi) + \sum_{\xi=\theta^E}^{\theta} d^E_v(\xi) \\
  &\le \textstyle\sum_{\xi=0}^{\theta}\bigl(\bar d_v(\xi) + \Delta^E\bar d_v(\xi)\bigr), \qquad  v\in V,\ \theta\in\mathcal{T}^E, \label{app:eq:dE-ub}\\
  \textstyle\sum_{\xi=0}^{\theta^E-1}&\bigl(d_v(\xi)+u_v(\xi)\bigr) + \textstyle\sum_{\xi=\theta^E}^{\theta}\bigl(d^E_v(\xi)+u^E_v(\xi)\bigr) \notag\\
    &= \textstyle\sum_{\xi=0}^{\theta}\bigl(\bar d_v(\xi) + \Delta^E\bar d_v(\xi)\bigr), \qquad v\in V,\ \theta\in\mathcal{T}^E, \label{app:eq:dE-bal}
\end{align}
\begin{align}
  \phi^E_a(\theta) &\le \bar f^{\mathrm{sup}}_a, & a&\in A,\ \theta\in\mathcal{T}^E,\ E\in\mathcal{E}, \label{app:eq:phi-cap}\\
  \pi^E_v(\theta)  &\le \bar s^{\mathrm{sup}}_v, & v&\in V,\ \theta\in\mathcal{T}^E,\ E\in\mathcal{E}, \label{app:eq:pi-cap}\\
  \psi^E_v(\theta) &\le \bar\kappa^{\mathrm{sup}}_v, & v&\in V,\ \theta\in\mathcal{T}^E,\ E\in\mathcal{E}, \label{app:eq:psi-cap}\\
  \textstyle\sum_{a\in A}\sum_{\theta\in\mathcal{T}^E}\phi^E_a(\theta) &\le \Phi, & E&\in\mathcal{E}, \\
  \textstyle\sum_{v\in V}\sum_{\theta\in\mathcal{T}^E}\pi^E_v(\theta) &\le \Pi, & E&\in\mathcal{E}, \\
  \textstyle\sum_{v\in V}\sum_{\theta\in\mathcal{T}^E}\psi^E_v(\theta) &\le \Psi, & E&\in\mathcal{E}. \label{app:eq:sup-budget}
\end{align}
\begin{align}
  H^{\mathrm{prep}}\bigl(\cdot \bigr) &\le B^{\mathrm{prep,\,str}}, \qquad E\in\mathcal{E}, \label{app:eq:coupling}
\end{align}
and the variable domains of Table~\ref{app:tab:vars}.
 
\subsection{Objective Functions}\label{app:objectives}
All objectives are expressed through the performance gap $g_E(\theta)=\sum_{v\in V}\sum_{\xi=\theta^E}^{\theta}\bigl(u^E_v(\xi)-u_v(\xi)\bigr)$, $\theta\in\mathcal{T}^E$. 
\begin{align}
  r^{\mathrm{reb},E}(\theta)=1 &\Rightarrow g_E(\theta)\le 0, & \theta&\in\mathcal{T}^E, \label{app:eq:reb-ind}\\
  r^{\mathrm{reb},E}(\theta) &\ge r^{\mathrm{reb},E}(\theta-1), & \theta&\in\mathcal{T}^E,\ \theta>\theta^E, \label{app:eq:reb-mono}\\
  r^{\mathrm{res},E}(\theta)=1 &\Rightarrow g_E(\theta)\le 0, & \theta&\in\mathcal{T}^E, \label{app:eq:res-ind}\\
  r^{\mathrm{res},E}(\theta) &\le r^{\mathrm{res},E}(\theta-1), & \theta&\in\mathcal{T}^E,\ \theta>\theta^E, \label{app:eq:res-mono}\\
  g_E(\theta) &\le r^{\mathrm{loss},E}(\theta), & \theta&\in\mathcal{T}^E, \label{app:eq:loss-cap}\\
  g_E(\theta) &\le r^{\mathrm{MPD},E}, & \theta&\in\mathcal{T}^E, \label{app:eq:mpd-peak}
\end{align}
\begin{align}
  \Freb(z^1,z^2) &= \textstyle\sum_{E\in\mathcal{E}} w^E\sum_{\theta\in\mathcal{T}^E}\bigl(1-r^{\mathrm{reb},E}(\theta)\bigr), \label{app:eq:obj-reb}\\
  \Fres(z^1,z^2) &= T - \textstyle\sum_{E\in\mathcal{E}} w^E\sum_{\theta\in\mathcal{T}^E} r^{\mathrm{res},E}(\theta), \label{app:eq:obj-res}\\
  \Floss(z^1,z^2) &= \textstyle\sum_{E\in\mathcal{E}} w^E\sum_{\theta=\theta^E}^{T} r^{\mathrm{loss},E}(\theta), \label{app:eq:obj-loss}\\
  \Fmpd(z^1,z^2) &= \textstyle\sum_{E\in\mathcal{E}} w^E\, r^{\mathrm{MPD},E}. \label{app:eq:obj-mpd}
\end{align}

\section{Technical Proofs}\label{app:proofs}
In the following, we present the technical proofs of Section~\ref{sec:multiobjective}. To this end, for $u,v\in\RR^k$ we write $u\leqq v$ if $u_i\le v_i$ for all $i$, and $u\le v$ if $u\leqq v$ and $u\neq v$. For a solution $z\in Z$ and an adverse event $E\in\mathcal E$, we denote by $\ell(z)=(\boldsymbol\tau^{\mathrm{res}}(z),\boldsymbol\tau^{\mathrm{reb}}(z))$ the \emph{realized temporal resilience profile} of $z$, given by $\tau^{\mathrm{res}}_E(z)=\sum_{\theta\in\mathcal T^E}r^{\mathrm{res},E}(\theta)$ and $\tau^{\mathrm{reb}}_E(z)=\sum_{\theta\in\mathcal T^E}\bigl(1-r^{\mathrm{reb},E}(\theta)\bigr)$. By the definition of the resistance and rebound indicators, $z\in\mathcal Z(\ell(z))$, and since the two temporal resilience criteria are constant on $\mathcal Z(\ell)$, we have $\Freb(z)=\Freb(\ell(z))$ and $\Fres(z)=\Fres(\ell(z))$.
 
We further use that, for each $\ell\in\mathcal L$, the residual problem~\eqref{eq:residual-bi-obj} is a bounded bicriteria linear program, so its nondominated set $Y_N(\ell)$ is \emph{externally stable}: every feasible residual image is component-wise bounded below by some element of $Y_N(\ell)$, that is, for every $z\in\mathcal Z(\ell)$ there is $y\in Y_N(\ell)$ with $y\leqq\bigl(\Floss(z),\Fmpd(z)\bigr)$~\citep{matthias_ehrgott_multicriteria_2005}. The nondominated set $Y_N$ of the two-stage problem~\eqref{eq:two-stage-problem} is externally stable in the same sense.

\subsection{Proof of Theorem~\ref{thm:decomp}}
Let $y\in Y_N$ and $z\in Z$ with $F(z)=y$. Set $\tau^{\mathrm{res}}=\Fres(z)$, $\tau^{\mathrm{reb}}=\Freb(z)$, and $\ell=(\tau^{\mathrm{res}},\tau^{\mathrm{reb}})$. Then $z\in\mathcal Z(\ell)$, and hence $\ell\in\mathcal L$. Suppose $z$ is not efficient for~\eqref{eq:residual-bi-obj}. Then there is $z'\in\mathcal Z(\ell)$ with $\bigl(\Floss(z'),\Fmpd(z')\bigr)\le\bigl(\Floss(z),\Fmpd(z)\bigr)$. Since $\Fres(z')=\Fres(z)$ and $\Freb(z')=\Freb(z)$, this yields $F(z')\le F(z)$, contradicting efficiency of $z$. Thus, $z$ is efficient for~\eqref{eq:residual-bi-obj}, and consequently $Z^*(\ell)$ contains a solution $\hat z$ with $F(\hat z)=y$. 

By construction, $Z^*(\ell)$ contains a solution $\hat z$ with $\bigl(\Floss(\hat z),\Fmpd(\hat z)\bigr)=\bigl(\Floss(z),\Fmpd(z)\bigr)$. As $\hat z\in\mathcal Z(\ell)$, the temporal criteria coincide with those of $z$, that is, $\Freb(\hat z)=\Freb(\ell)=\Freb(z)$ and $\Fres(\hat z)=\Fres(\ell)=\Fres(z)$; consequently $F(\hat z)=F(z)=y$. Since $y\in Y_N$ is nondominated, no feasible solution dominates $\hat z$, so $\hat z$ is efficient. Thus $\bigcup_{\ell\in\mathcal L}Z^*(\ell)$ contains an efficient solution with image $y$. \Halmos

\subsection{Proof of Theorem~\ref{thm:intractable}}

We construct the instance, characterize its efficient first-stage decisions, and then count the temporal resilience profiles they realize. Unless explicitly stated otherwise, input parameters are per default set to~0. Firstly, we choose~$T = 1$, that is, $\mathcal{T} = \{0, 1\}$. Abbreviating
\begin{align}\label{eq:delta}
	\delta_k \coloneqq 2^{\,K+1-k}-1 \qquad\text{for } k \in \{1,\dots,K\},
\end{align}
we introduce a source vertex~$\rho$ with supply $\bar{s}_{\rho}(1) = 2^{K+1}$ and for each $k \in \{1, \dots, K\}$ a vertex~$v_k$ with demand $\bar{d}_{v_k}(1) = \delta_k$. The arc set~$A$ consists of arcs $a_k = (\rho, v_k)$ for $k \in \{1, \dots, K\}$ with $\bar{f}_{a_k}(1) = \delta_k$.

We construct~$K$ adverse events~$E_k, k \in \{1, \dots, K\}$. For $k \in \{1, \dots, K\}$, the event~$E_k$ blocks the arc~$a_k$ by setting $\Delta^{E_k}\bar{f}_{a_k}(1) =  -\delta_k$. We can prevent the adverse event from affecting the flow over time by the possibility of adding redundant capacity of up to $\bar{f}^{\text{exp}}_{a_k} = \delta_k$ in the first stage with the decision variable~$\alpha_{a_k} \in [0, 1]$. Apart from that, there are no further possibilities to increase resilience of the system. The weights of the adverse events are chosen as
\begin{align}\label{eq:weights}
	w^{E_k} = \frac{2^{\,k}}{2^{K+1}-2} \qquad\text{for } k \in \{1, \dots, K\},
\end{align}
so that $\sum_{k=1}^K w^{E_k} = 1$. The two parameter choices~\eqref{eq:delta} and~\eqref{eq:weights} are tied to each other by the \emph{complementarity relation}
\begin{align}\label{eq:complementary}
	w^{E_k}\bigl(1+\delta_k\bigr)
	\;=\; \frac{2^{\,k}\cdot 2^{\,K+1-k}}{2^{K+1}-2} \;=\; \Lambda ,
\end{align}
which holds for all $k \in \{1,\dots,K\}$ with a constant $\Lambda \coloneqq 2^{K+1}/(2^{K+1}-2) > 1$ not depending on~$k$. 

We assume~$K$ is even and imply the preparedness constraint $\sum_{k = 1}^K \alpha_{a_k} \leq \frac{K}{2}$, obtained from unit costs for the expansive preparedness decisions and a preparedness budget of~$\frac K2$.

\medskip

A first-stage decision is a vector $\alpha = (\alpha_{a_1},\dots,\alpha_{a_K}) \in [0,1]^K$; it is feasible if and only if $\sum_{k}\alpha_{a_k} \le \frac K2$. We stress that the expansion variables are \emph{not} binary. We write
\[
	\mathcal K(\alpha) \coloneqq \{\,k \in \{1,\dots,K\} \;:\; \alpha_{a_k}=1\,\}
\]
for the set of \emph{fully} protected indices and $\bar{\mathcal K}(\alpha)$ for its complement in $\{1,\dots,K\}$. A first-stage decision is called \emph{integral} if $\alpha \in \{0,1\}^K$, and \emph{tight} if $\sum_k \alpha_{a_k} = \frac K2$.

\begin{lemma}\label{lem:image}
	Let $\alpha$ be a feasible first-stage decision. Then every efficient solution with first-stage decision~$\alpha$ has the image
	\begin{align}\label{eq:image}
		\Fres &= \Freb = \sum_{k\in\bar{\mathcal K}(\alpha)} w^{E_k}, \\
		\Floss &= \Fmpd = \sum_{k=1}^{K} w^{E_k}\delta_k\bigl(1-\alpha_{a_k}\bigr). \nonumber
	\end{align}
	If, in addition, $\alpha$ is integral and tight, then
	\begin{align}\label{eq:line}
		\Fres + \Floss \;=\; \tfrac K2\,\Lambda .
	\end{align}
\end{lemma}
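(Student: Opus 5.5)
The plan is to compute the image directly. The instance decouples completely across the events $E_k$, and the horizon is a single reactive time step. I read the construction with start times $\theta^{E_k}=1$, so that $\mathcal T^{E_k}=\{1\}$ and $\lvert\mathcal T^{E_k}\rvert=1$. I also assume that the effectiveness condition pins the nominal flow to the reference operation. In that reference operation, $\rho$ sends $\delta_k$ along each $a_k$ at time~$1$ with $\tau_{a_k}=0$, so all nominal demand is met and $u_{v_k}\equiv 0$. This assumption is the step I expect to be the real obstacle. If the nominal flow were free, it could deliberately leave demand at $v_k$ unmet and thereby shrink the gap $g_{E_k}$, which would break the formula. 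So the first thing to do is to check the instance's default effectiveness data and confirm that they force $d_{v_k}(1)=\delta_k$. If the defaults do not do this, I would add a reference flow with zero tolerance to the construction. I would also observe that the source supply $2^{K+1}$ exceeds $\sum_k\delta_k$, so supply never binds.

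Next I would bound the gap under a fixed event $E_k$ from below. Under $E_k$, the capacity of $a_k$ at time~$1$ becomes $\delta_k+\delta_k\alpha_{a_k}-\delta_k=\delta_k\alpha_{a_k}$; there are no reinforcing or supporting options. Vertex $v_k$ can only be reached through $a_k$, since routing to the disposal vertex does not help. Hence its reactive met demand is at most $\delta_k\alpha_{a_k}$, and we get $g_{E_k}(1)\ge \delta_k(1-\alpha_{a_k})$. Every other arc keeps capacity at least $\delta_j$, so the reactive flow can copy the nominal flow at all $v_j$ with $j\neq k$, and the bound is attained. The per-event reactive problems are independent given $\alpha$, and this single reactive flow minimizes the gap, which in turn drives all four criteria. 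Therefore any efficient solution with first-stage decision $\alpha$ must use it. Otherwise it would be dominated by the solution that replaces its reactive flow for $E_k$ with this gap-minimizing one and sets the auxiliary variables optimally.

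With $g_{E_k}(1)=\delta_k(1-\alpha_{a_k})$, I would read off the auxiliary variables. At optimality, $r^{\mathrm{loss},E_k}(1)=r^{\mathrm{MPD},E_k}=\delta_k(1-\alpha_{a_k})$, which gives $\Floss=\Fmpd=\sum_k w^{E_k}\delta_k(1-\alpha_{a_k})$. The indicators can equal~$1$ (resistance) or be forced to~$1$ by minimization (rebound) exactly when $g_{E_k}(1)\le 0$, that is, when $k\in\mathcal K(\alpha)$. This gives $\tau^{\mathrm{res}}_{E_k}=[k\in\mathcal K(\alpha)]$ and $\tau^{\mathrm{reb}}_{E_k}=[k\in\bar{\mathcal K}(\alpha)]$. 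Using $T=1$ and $\sum_k w^{E_k}=1$, we obtain $\Fres=1-\sum_{\mathcal K(\alpha)}w^{E_k}=\sum_{\bar{\mathcal K}(\alpha)}w^{E_k}=\Freb$. Here efficiency is used again to exclude nonoptimal settings of the indicators.

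Finally, for integral tight $\alpha$, we have $\alpha_{a_k}=0$ exactly on $\bar{\mathcal K}(\alpha)$ and $\lvert\bar{\mathcal K}(\alpha)\rvert=K/2$. Therefore $\Fres+\Floss=\sum_{k\in\bar{\mathcal K}(\alpha)}w^{E_k}(1+\delta_k)=\tfrac K2\Lambda$ by the complementarity relation~\eqref{eq:complementary}.
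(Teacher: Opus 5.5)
Your proposal is correct and follows essentially the same route as the paper's proof. Both bound $g_{E_k}(1)$ below by $\delta_k(1-\alpha_{a_k})$, use efficiency to force equality and optimal auxiliary variables, read off the indicators and the loss and MPD values, aggregate with $T=1$ and $\sum_k w^{E_k}=1$, and conclude with~\eqref{eq:complementary}. Your concern about the nominal flow is well placed: the paper simply asserts that the nominal flow meets every demand ($P_x\equiv 0$), and without a constraint enforcing this, a nominal flow serving no demand would let every gap be nonpositive and give the ideal image $(0,0,0,0)$, so pinning it by a zero-tolerance reference flow repairs the construction rather than departing from the paper's argument.
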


\begin{proof}{Proof.}
	The nominal flow sends $\delta_k$ units along~$a_k$ for every~$k$, which is feasible because $\bar s_\rho(1) = 2^{K+1} \ge \sum_{k}\delta_k$, and meets every demand, so $P_x\equiv 0$. Under the event~$E_k$ the impacted capacity of~$a_k$ is $\bar f_{a_k}(1) + \bar f^{\textup{exp}}_{a_k}\alpha_{a_k} + \Delta^{E_k}\bar f_{a_k}(1) = \delta_k\,\alpha_{a_k}$, while all remaining arcs keep their capacity~$\delta_j$ and hence exactly serve their own demand~$\delta_j$. Every reactive flow therefore satisfies
	\[
		g_{E_k}(0) = 0, \qquad g_{E_k}(1) \;\ge\; \delta_k\bigl(1-\alpha_{a_k}\bigr),
	\]
	with equality if and only if the impacted capacity of~$a_k$ is saturated. As all four criteria are non-decreasing in $g_{E_k}(1)$, only reactive flows attaining equality can be efficient, and we restrict to those.

	Since $\mathcal T^{E_k}=\{1\}$ and $P_x\equiv0$, we have $r^{\textup{res},E_k}(1)=1$ if and only if $g_{E_k}(1)\le 0$, that is, if and only if $\alpha_{a_k}=1$. Hence $\tau^{\textup{res}}_{E_k} = \mathbb 1_{\{\alpha_{a_k}=1\}}$ and $\tau^{\textup{reb}}_{E_k} = 1-\mathbb 1_{\{\alpha_{a_k}=1\}}$, whereas $\Rloss(x,x^{E_k}) = \Rmpd(x,x^{E_k}) = \delta_k(1-\alpha_{a_k})$ depends on~$\alpha_{a_k}$ linearly. Aggregating over the events with $T=1$ and $\sum_k w^{E_k}=1$ yields~\eqref{eq:image}.

	If $\alpha$ is integral and tight, then $\alpha_{a_k}=0$ for every $k\in\bar{\mathcal K}(\alpha)$ and $\lvert\bar{\mathcal K}(\alpha)\rvert = \frac K2$, so that summing~\eqref{eq:complementary} over $k\in\bar{\mathcal K}(\alpha)$ gives $\Fres+\Floss = \sum_{k\in\bar{\mathcal K}(\alpha)}w^{E_k}(1+\delta_k) = \frac K2\Lambda$.
\end{proof}

Note that the four criteria collapse pairwise on this instance; the statement of Theorem~\ref{thm:intractable} therefore already holds for the bicriteria subproblem consisting of one temporal and one performance criterion. Note further that partial expansion is not worthless: it does not improve the temporal criteria at all, since these react only to $\alpha_{a_k}=1$, but it does improve the performance criteria linearly. The following proposition shows that this is never enough.

\begin{proposition}\label{prop:all-efficient}
	A feasible first-stage decision is efficient if and only if it is integral and tight. Distinct such decisions have distinct images, and there are $\binom{K}{K/2}$ of them.
\end{proposition}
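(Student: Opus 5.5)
By Lemma~\ref{lem:image}, every efficient solution with first-stage decision~$\alpha$ has image $(a(\alpha),a(\alpha),b(\alpha),b(\alpha))$, where
\[
a(\alpha)=\sum_{k\in\bar{\mathcal K}(\alpha)} w^{E_k}, \qquad b(\alpha)=\sum_k w^{E_k}\delta_k(1-\alpha_{a_k}).
\]
Efficiency of~$\alpha$ therefore reduces to nondominance of the pair $(a(\alpha),b(\alpha))$ among all feasible~$\alpha$, and I would argue entirely in this plane. The key structural facts are two. First, $w^{E_k}=2^k/(2^{K+1}-2)$, so distinct index sets $\bar{\mathcal K}$ give distinct values of~$a$ by uniqueness of binary expansions. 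Second, the complementarity relation~\eqref{eq:complementary} gives $w^{E_k}\delta_k=\Lambda-w^{E_k}$.

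\emph{Step 1 (efficient decisions are integral and tight).} Let $\alpha$ be feasible but not integral or not tight, and put $S=\mathcal K(\alpha)$, so $|S|\le K/2$. If $|S|<K/2$, I would repair $\alpha$ as follows: keep $\alpha_{a_k}=1$ on~$S$, and move the remaining budget mass onto full protection of further indices to obtain an integral tight $\alpha'$. The choice of indices is the delicate part. I would add the indices $j\notin S$ with the largest $w^{E_j}\delta_j$, or rather argue via the line~\eqref{eq:line}. Enlarging $\mathcal K$ only shrinks~$a$, so $a(\alpha')\le a(\alpha)$, with strict inequality whenever a new index is added. For~$b$, note that
\[
b(\alpha)=\sum_{k\notin S} w^{E_k}\delta_k(1-\alpha_{a_k}).
\]
This is a linear function of the fractional part, and its minimum over $\sum_{k\notin S}\alpha_{a_k}\le K/2-|S|$, $\alpha_{a_k}<1$, is approached by filling the largest coefficients $w^{E_k}\delta_k=\Lambda-w^{E_k}$. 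Since these coefficients are nearly equal, namely $\Lambda-w^{E_k}$ with $w^{E_k}<1$, any integral choice of $m=K/2-|S|$ indices gives
\[
b(\alpha')\le \sum_{k\notin S}(\Lambda-w^{E_k})-m\Lambda+\max\text{-sum of chosen } w .
\]
I would compare this with $b(\alpha)\ge \sum_{k\notin S}(\Lambda-w^{E_k})-\sum_{k\notin S}\alpha_{a_k}(\Lambda-w^{E_k})$. Choosing for $\alpha'$ the indices $j\notin S$ with the \emph{smallest} $w^{E_j}$ fails to beat $b$ in general, so instead I would pick indices maximizing $\Lambda-w^{E_j}$, i.e.\ those with the smallest weight. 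It is then a matter of checking that fractional mass $m$ spread over coefficients each at most $\Lambda-w_{\min}$ cannot beat the integral filling of the $m$ best ones. This holds by the greedy/LP argument for the fractional knapsack with unit weights, which fills the best coefficients.

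The remaining case is $|S|=K/2$ with leftover fractional mass, which is impossible since the budget is exhausted; hence then $\alpha$ is integral and tight. So a non-integral or non-tight $\alpha$ is dominated by $\alpha'$: $a$ strictly decreases and $b$ weakly decreases.

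\emph{Step 2 (integral tight decisions are efficient).} Take integral tight $\alpha$ and any feasible $\beta$ with $a(\beta)\le a(\alpha)$ and $b(\beta)\le b(\alpha)$. By Step~1, $\beta$ is weakly dominated by some integral tight $\beta'$. Both $\alpha$ and $\beta'$ lie on the line $a+b=\tfrac K2\Lambda$ of~\eqref{eq:line}, on which componentwise comparison forces equality; hence $(a,b)(\beta)=(a,b)(\alpha)$ and $\alpha$ is not dominated.

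\emph{Step 3 (distinctness and count).} Distinct integral tight $\alpha$ have distinct sets $\bar{\mathcal K}$, hence distinct $a$ by the binary-expansion remark, and so distinct images; there are $\binom{K}{K/2}$ such sets. I expect the main obstacle to be the careful domination argument for fractional or non-tight $\alpha$ in Step~1, in particular ensuring that $b(\alpha')\le b(\alpha)$ holds for a specific integral completion rather than only for the LP-optimal one.
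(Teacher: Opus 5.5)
Your proof is correct, but Step~1 takes a different route from the paper. The paper dominates a non-integral or non-tight $\alpha$ by local moves. If $\alpha$ is not tight, it raises one coordinate that is below~$1$. If $\alpha$ is tight but fractional, it shifts mass from a fractional index $k_2$ to a fractional index $k_1<k_2$. This strictly lowers $\Floss$ because $w^{E_k}\delta_k=\Lambda-w^{E_k}$ is strictly decreasing in~$k$, while $\Fres$ does not increase.

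You instead observe that non-integral or non-tight forces $\lvert\mathcal K(\alpha)\rvert<\frac K2$. You then jump in one step to the integral tight completion $\alpha'$, which fully protects the $m=\frac K2-\lvert\mathcal K(\alpha)\rvert$ indices outside $\mathcal K(\alpha)$ with the largest coefficients $w^{E_k}\delta_k$. The unit-weight fractional knapsack bound, with integral capacity $m$, gives $b(\alpha')\le b(\alpha)$. The strict improvement comes from $a$, since at least one index becomes newly fully protected.

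Your route buys two things. First, your Step~1 uses only positivity of $w^{E_k}\delta_k$; the complementarity relation enters only through the line~\eqref{eq:line} in Step~2. Second, it directly produces an integral tight dominator, which is exactly what Step~2 needs. The paper's part~(iii) asserts that~(ii) yields an integral tight dominator. Strictly, that requires iterating its moves and noting that they terminate: each move makes the budget tight or pushes a coordinate to $0$ or $1$. In exchange, the paper's moves are more elementary and strictly improve the performance criteria rather than the temporal ones.

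Two cosmetic points. Your sentence saying that choosing the indices with the smallest $w^{E_j}$ ``fails'' contradicts its own next clause; presumably you mean that choosing the largest weights can fail. Also, the worry in your last sentence is moot, because your $\alpha'$ is by construction the LP-optimal integral filling.
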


\begin{proof}{Proof.}
	\emph{(i) Integral tight decisions are pairwise incomparable.} Let $\alpha,\alpha'$ be integral and tight with $\alpha\neq\alpha'$, and abbreviate $\mathcal K \coloneqq \mathcal K(\alpha)$, $\mathcal K' \coloneqq \mathcal K(\alpha')$, so that $\lvert\mathcal K\rvert = \lvert\mathcal K'\rvert = \frac K2$ and $\mathcal K \neq \mathcal K'$. Since $w^{E_k}$ is a positive multiple of~$2^k$, distinct subsets of $\{1,\dots,K\}$ have distinct weight sums, so $\Fres(\alpha)\neq\Fres(\alpha')$ by~\eqref{eq:image}; in particular the images are distinct. Assume without loss of generality $\Fres(\alpha)<\Fres(\alpha')$. Both images satisfy~\eqref{eq:line}, whence
	\begin{align*}
		\Floss(\alpha) &= \tfrac K2\Lambda - \Fres(\alpha) \\
		&>\; \tfrac K2\Lambda - \Fres(\alpha') = \Floss(\alpha') ,
	\end{align*}
	so neither image dominates the other.

	\emph{(ii) Every other feasible decision is dominated.} Let $\alpha$ be feasible and not both integral and tight. In either of the two cases below we construct a feasible~$\alpha'$ with $\Fres(\alpha')\le\Fres(\alpha)$ and $\Floss(\alpha')<\Floss(\alpha)$, so that $\alpha'$ dominates~$\alpha$ by~\eqref{eq:image} and $\alpha$ is not efficient. We use twice that enlarging a coordinate of~$\alpha$ can only enlarge $\mathcal K(\alpha)$ and hence, by~\eqref{eq:image}, can only decrease~$\Fres$. Moreover, \eqref{eq:complementary} gives
	\begin{align}\label{eq:decreasing}
		w^{E_k}\delta_k = \Lambda - w^{E_k} ,
	\end{align}
	which is positive and strictly decreasing in~$k$ because $w^{E_k}$ is strictly increasing in~$k$.

	First assume that $\alpha$ is \emph{not tight}, that is, $t \coloneqq \frac K2 - \sum_{k}\alpha_{a_k} > 0$. Since $\sum_{k}\alpha_{a_k} \le \frac K2 < K$, there is an index~$k$ with $\alpha_{a_k} < 1$. Let $\alpha'$ agree with~$\alpha$ except for $\alpha'_{a_k} \coloneqq \alpha_{a_k} + \varepsilon$, where $\varepsilon \coloneqq \min\{t,\, 1-\alpha_{a_k}\} > 0$. Then $\alpha'\in[0,1]^K$ consumes at most $\frac K2$ of the budget and is thus feasible, $\Fres(\alpha')\le\Fres(\alpha)$, and
	\[
		\Floss(\alpha') = \Floss(\alpha) - \varepsilon\, w^{E_k}\delta_k \;<\; \Floss(\alpha)
	\]
	by~\eqref{eq:decreasing}.

	Now assume that $\alpha$ is tight but \emph{not integral} and put $F \coloneqq \{\,k : 0 < \alpha_{a_k} < 1\,\} \neq \emptyset$. As $\sum_{k\in F}\alpha_{a_k} = \frac K2 - \lvert\mathcal K(\alpha)\rvert$ is an integer while no single element of~$(0,1)$ is, we have $\lvert F\rvert \ge 2$. Choose $k_1,k_2 \in F$ with $k_1 < k_2$ and let $\alpha'$ agree with~$\alpha$ except for
	\[
		\alpha'_{a_{k_1}} \coloneqq \alpha_{a_{k_1}} + \varepsilon , \qquad
		\alpha'_{a_{k_2}} \coloneqq \alpha_{a_{k_2}} - \varepsilon ,
	\]
	where $\varepsilon \coloneqq \min\{1-\alpha_{a_{k_1}},\, \alpha_{a_{k_2}}\} > 0$. Then $\alpha'\in[0,1]^K$ consumes exactly the same budget as~$\alpha$ and is thus feasible. Since $\alpha'_{a_{k_2}} < 1$, the index~$k_2$ lies in neither $\mathcal K(\alpha)$ nor $\mathcal K(\alpha')$, so $\mathcal K(\alpha) \subseteq \mathcal K(\alpha')$ and $\Fres(\alpha') \le \Fres(\alpha)$. Finally, by~\eqref{eq:decreasing} and $k_1 < k_2$,
	\begin{align*}
		&\Floss(\alpha') - \Floss(\alpha) \\
		&\quad = \varepsilon\bigl(w^{E_{k_2}}\delta_{k_2} - w^{E_{k_1}}\delta_{k_1}\bigr)
		 = \varepsilon\bigl(w^{E_{k_1}} - w^{E_{k_2}}\bigr) < 0 .
	\end{align*}

	\emph{(iii) Integral tight decisions are efficient.} Let $\alpha'$ be integral and tight and suppose some feasible $\beta$ dominates it. If $\beta$ is integral and tight, this contradicts~(i). Otherwise, by~(ii) there is an integral tight $\beta'$ dominating~$\beta$; by transitivity $\beta'$ dominates~$\alpha'$, again contradicting~(i).

	Finally, integral tight decisions are exactly the incidence vectors of the $\binom{K}{K/2}$ subsets of $\{1,\dots,K\}$ of cardinality~$\frac K2$.
\end{proof}
We are now ready to prove Theorem~\ref{thm:intractable}.
\begin{proof}{Proof of Theorem~\ref{thm:intractable}.}
	Recall that a temporal resilience profile $\ell\in\mathcal L$ is \emph{efficient} if its lifted image set contains a nondominated image, that is, $\widehat Y(\ell)\cap Y_N\neq\emptyset$.

	By Proposition~\ref{prop:all-efficient} the efficient first-stage decisions are exactly the integral tight ones, that is, the incidence vectors of the subsets $\mathcal K\subseteq\{1,\dots,K\}$ with $\lvert\mathcal K\rvert=\frac K2$; there are $\binom{K}{K/2}$ of them and their images are pairwise distinct. Let $z$ be an efficient solution with such a first-stage decision~$\alpha$. Since $\mathcal T^{E_k}=\{1\}$ for every~$k$, the proof of Lemma~\ref{lem:image} gives $\tau^{\mathrm{res}}_{E_k}(z)=\mathbb 1_{\{\alpha_{a_k}=1\}}$ and $\tau^{\mathrm{reb}}_{E_k}(z)=1-\mathbb 1_{\{\alpha_{a_k}=1\}}$, so the realized temporal resilience profile satisfies $\boldsymbol\tau^{\mathrm{res}}(z)=\mathbb 1_{\mathcal K(\alpha)}$ and $\boldsymbol\tau^{\mathrm{reb}}(z)=\mathbb 1_{\bar{\mathcal K}(\alpha)}$. This profile depends only on~$\mathcal K(\alpha)$ and not on the particular efficient solution~$z$; we denote it by $\ell(\alpha)$.

	The assignment $\alpha\mapsto\ell(\alpha)$ is injective, its inverse reading~$\mathcal K$ off the support of~$\boldsymbol\tau^{\mathrm{res}}$; hence the $\binom{K}{K/2}$ integral tight decisions realize $\binom{K}{K/2}$ distinct temporal resilience profiles. Each profile~$\ell(\alpha)$ is efficient: the efficient solution~$z$ lies in $\mathcal Z(\ell(\alpha))$, so its image $F(z)$ belongs to the lifted image set $\widehat Y(\ell(\alpha))$, and $F(z)$ is nondominated by Proposition~\ref{prop:all-efficient}. Distinct integral tight decisions $\alpha\neq\alpha'$ satisfy $\bar{\mathcal K}(\alpha)\neq\bar{\mathcal K}(\alpha')$, and since each $w^{E_k}$ is a positive multiple of~$2^{k}$, distinct index sets have distinct weight sums, so $\Fres(\alpha)\neq\Fres(\alpha')$ by~\eqref{eq:image} and the two nondominated images differ. The lifted image sets of these profiles therefore each contain a distinct nondominated image.

	Conversely, every nondominated image is the image of an efficient solution, which by Proposition~\ref{prop:all-efficient} is integral tight and hence realizes one of the profiles~$\ell(\alpha)$. Thus the instance has exactly $\binom{K}{K/2}$ temporal resilience profiles that yield distinct nondominated images.

	Finally,
	\[
		\binom{K}{K/2}\;\ge\;\frac{2^{K}}{K+1},
	\]
	since $\binom{K}{K/2}$ is the largest of the $K+1$ binomial coefficients summing to~$2^{K}$. The instance has $K+1$ vertices, $K$ arcs and $K$ events, and the numbers $\delta_k$, $w^{E_k}$ and $\bar s_\rho(1)$ occurring in it are of magnitude at most $2^{K+1}$ and hence need $O(K)$ bits each, so the instance is encoded in $O(K^2)$ bits. The number of efficient temporal resilience profiles therefore grows superpolynomially in the encoding length, which establishes intractability.
\end{proof}

\section{Dichotomic Search for Bicriteria Linear Programs}\label{app:dichotomic-search}
In the following, we describe the dichotomic search used to compute the nondominated set of a
bicriteria linear program
\begin{equation}\label{eq:app-biclp}
    \min_{z \in Z}\; (F_1(z), F_2(z)) \coloneqq \bigl(c_1^\top z,\; c_2^\top z\bigr),
\end{equation}
where $Z \subseteq \mathbb{R}^n$ is a nonempty polyhedron and $c_1, c_2 \in
\mathbb{R}^n$. We assume that~\eqref{eq:app-biclp} is bounded, so that both
criteria attain a finite minimum over $X$. For a weight
$\lambda \in [0,1]$ the weighted sum problem can be written as 
\begin{equation}\label{eq:app-ws}
    \mathrm{WS}(\lambda):\qquad
    \min_{z \in Z}\; \lambda\, c_1^\top z + (1-\lambda)\, c_2^\top z ,
\end{equation}
for which denote by $z(\lambda)$ an optimal feasible solution of $\mathrm{WS}(\lambda)$
and by $F(\lambda) = (c_1^\top z(\lambda), c_2^\top z(\lambda))$ its image. For a bicriteria linear program~\eqref{eq:app-biclp}, it is well known that every nondominated image is the image of an optimal solution of $\mathrm{WS}(\lambda)$ for some $\lambda \in [0,1]$ and that the nondominated set is a connected, piecewise-linear curve consisting of finitely many line segments (cf.~\cite{matthias_ehrgott_multicriteria_2005}).
Consequently, it suffices to compute the extreme points of the polyhedron $Y  = F(Z) = \{F(z) : z \in Z \}$ that are nondominated. 

The dichotomic search does so. 
The search is initialized with the two lexicographic optima. Let $z^1$ minimize
$c_1^\top z$ over $Z$ and, among all such minimizers, $c_2^\top z$. Symmetrically,
let $z_2$ minimize $c_2^\top z$ and then $c_1^\top z$. Their images $y_1 = F(z_1)$
and $y_2 = F(z_2)$ are the two extreme nondominated images. The procedure maintains a queue of pairs of nondominated images~$\{y^a,y^b\}$, $y^a = (y^a_1,y^a_2)$ and $y^b = (y^b_1, y^b_2)$, satisfying  $y^a_1 < z^b_1$ and $y^a_2 > y^b_2$
In each iteration, it pops a pair~$\{y^a,y^b\}$ from the queue and solves the weighted sum problem~$\mathrm{WS}(\lambda^{ab})$, where
\begin{equation}\label{eq:app-lambda}
    \lambda^{ab} \;=\; \frac{y^a_2 - y^b_2}{(y^b_1 - y^a_1) + (y^a_2 - y^b_2)}.
\end{equation}
This yields an image $y^c = y(\lambda^{ab})$. If $\lambda^{ab} \cdot y^c_1 + (1-\lambda^{ab}) \cdot     y^{c}_2 = \lambda^{ab} \cdot y^a_1 + (1-\lambda^{ab}) \cdot y^a_2$, the segment $\overline{y^a y^b}$ is a subset of facet of the nondominated set and the
    pair is fathomed. Otherwise, $y^c$ is a new supported nondominated image, and the two
    subpairs $(y^a, y^c)$ and $(y^c, y^b)$ are added to the queue.
The search terminates when no open pair remains.
\begin{proposition}[\cite{matthias_ehrgott_multicriteria_2005}]\label{prop:app-correct}
The dichotomic search returns a set of nondominated images that contain all nondominated extreme points of the polyhedron $Y  = F(Z) = \{F(z) : z \in Z \}$.
\end{proposition}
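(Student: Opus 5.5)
The plan is to prove correctness of the dichotomic search through a loop invariant on the queue of open pairs, and to prove termination separately using the finiteness of the nondominated frontier. Throughout, I use two facts quoted above from~\cite{matthias_ehrgott_multicriteria_2005}. First, $Y+\RR^2_{\geqq}$ is a convex polyhedron. Second, its nondominated boundary is the graph of a convex, piecewise-linear, strictly decreasing function $\varphi$ on $[y_{1,1},y_{2,1}]$, with finitely many breakpoints. These breakpoints are exactly the nondominated extreme points of $Y$, and the two ends of the graph are the lexicographic optima $y_1$ and $y_2$.

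\emph{Invariant.} At every moment, each nondominated extreme point of $Y$ not yet output satisfies $y^a_1<y_1<y^b_1$ for some pair $\{y^a,y^b\}$ in the queue. For every queued pair, both $y^a$ and $y^b$ lie on the graph of $\varphi$. Initially this holds: the only pair is $\{y_1,y_2\}$, both of its points are output, and every other breakpoint lies strictly between them in the first coordinate.

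\emph{Fathoming step.} For a pair with $y^a_1<y^b_1$ and $y^a_2>y^b_2$, the weight $\lambda^{ab}$ from~\eqref{eq:app-lambda} lies in $(0,1)$. It is chosen so that $y^a$ and $y^b$ have the same weighted value $\beta$, so the level line $\{y:\lambda^{ab}y_1+(1-\lambda^{ab})y_2=\beta\}$ is the chord through $y^a$ and $y^b$.
\begin{itemize}
\item If the returned $y^c$ attains value $\beta$, then no point of $Y$ lies strictly below the chord. Convexity of $\varphi$ forces $\varphi$ to lie on or below the chord on $[y^a_1,y^b_1]$, so $\varphi$ coincides with the chord there. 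Any point strictly between is then a proper convex combination of $y^a$ and $y^b$, hence not extreme, and fathoming loses nothing.
\item Otherwise $y^c$ lies strictly below the chord. Since $\lambda^{ab}\in(0,1)$, the point $y^c$ is nondominated, and it must satisfy $y^a_1<y^c_1<y^b_1$; outside that strip, the monotonicity and convexity of $\varphi$ put every frontier point on or above the chord's level. Replacing $\{y^a,y^b\}$ by $\{y^a,y^c\}$ and $\{y^c,y^b\}$ preserves the invariant, because every unfound breakpoint strictly inside $(y^a_1,y^b_1)$ other than $y^c$ lies strictly inside one of the two subintervals.
\end{itemize}
At termination the queue is empty, so by the invariant every nondominated extreme point has been output.

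\emph{Termination.} This is the step I expect to be the main obstacle. As written, $\mathrm{WS}(\lambda)$ may return any optimal $z(\lambda)$, and its image could be a nonextreme point of an optimal edge. That could in principle spawn infinitely many new pairs along a single edge. I would resolve this by stipulating that each weighted-sum problem returns an image that is an extreme point of the optimal face of $Y$. This can be enforced by lexicographic tie-breaking: among the optima of $\mathrm{WS}(\lambda)$, minimize $c_1^\top z$; because $Y$ is a polyhedron bounded from below in $\lambda$-directions, this selects an endpoint of the optimal face. With this convention, every non-fathomed iteration outputs a new breakpoint of $\varphi$ that is strictly inside its pair's interval, so it is distinct from all previously found ones. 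Since $\varphi$ has finitely many breakpoints, only finitely many non-fathomed iterations can occur. Each iteration creates at most two pairs, so the number of fathomed iterations is also finite; in fact the count is linear in the number of breakpoints. Without this tie-breaking convention I would still argue that the set of output points contains all extreme points whenever the procedure stops, and state termination only under the extreme-point selection assumption.
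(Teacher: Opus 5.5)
\textbf{Gap: termination.} Your correctness argument is sound: the queue invariant, the fathoming case via convexity of $\varphi$, and the splitting case that forces $y^a_1<y^c_1<y^b_1$. It is the standard argument behind the result, which the paper only cites from Ehrgott and does not prove.

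The problem is termination. The search as specified lets $\mathrm{WS}(\lambda)$ return an arbitrary optimal $z(\lambda)$, and the proposition asserts that this search \emph{returns} a set. You prove termination only after modifying the algorithm with a tie-breaking rule that makes $y^c$ an extreme point of $Y$. You leave the general case open and even suggest it could fail. The extra hypothesis is not automatic either: a simplex solver returning a basic optimal $z$ need not give an extreme point $F(z)$ of $Y$, because a vertex of the optimal face of $Z$ can map into the relative interior of an edge of $Y$. As written, you prove a weaker statement.

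\textbf{The fix.} Your worry about infinitely many pairs along one edge is unfounded, and the missing step is short.

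In a non-fathomed iteration, the optimal value of $\mathrm{WS}(\lambda^{ab})$ is strictly below $\beta$. Hence neither $y^a$ nor $y^b$ lies on the optimal face of $Y$ for $\lambda^{ab}$. Since $\lambda^{ab}\in(0,1)$ and the recession cone of $Y$ lies in $\RR^2_{\geqq}$, this face is bounded. It is therefore either a single breakpoint $p=q$ of $\varphi$, or an edge $[p,q]$ of its graph whose endpoints $p,q$ are breakpoints. It is a segment of the graph containing $y^c$ but neither $y^a$ nor $y^b$, so it satisfies $y^a_1<p_1\le y^c_1\le q_1<y^b_1$.

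Let $m$ be the number of breakpoints of $\varphi$ in the open interval $(y^a_1,y^b_1)$. The interval of the child $\{y^a,y^c\}$ misses $q$, and the interval of the child $\{y^c,y^b\}$ misses $p$. So each child has at most $m-1$ interior breakpoints. A pair with $m=0$ has $\varphi$ affine on $[y^a_1,y^b_1]$, and by your own convexity argument it is fathomed at once.

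Induction on $m$ then shows that every pair spawns only finitely many descendants, so the search terminates for every choice of optimal solutions. Equivalently, each vertex and each edge of the frontier is the optimal face of at most one non-fathomed iteration; this also recovers your linear bound on the number of weighted-sum problems. Replacing your tie-breaking paragraph with this argument completes the proof.
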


\section{Instance Generation}\label{app:instances}

We generate random static minimum cost flow instances using the NETGEN grid algorithm of~\cite{klingman_netgen_1974}. These static instances are transformed
into networks with flows over time by replicating the arc set over time with varying the horizon over $T\in\{100,150,200,250,300\}$.
Arc capacities and costs are obtained by perturbing the original capacities and costs with i.i.d.\ samples from a normal distribution, while transit times are sampled independently from the discrete uniform distribution on $\{1,\ldots,5\}$. Storage costs and capacities are sampled independently from $[1,10]$ and $[1,50]$, respectively. 
We then solve the nominal flows over time problem, lexicographically maximizing the flow value accumulated over the horizon and minimizing the cost of the flow. 
Its optimal solution removes unnecessary slack from the instance. Storage, supply, and demand capacities are adjusted to values close to those utilized in the nominal solution, with small random perturbations. This tightening yields highly utilized networks. Since this adjustment may render the original solution infeasible, we then re-solve the nominal flow over time problem on the tightened instance under the same lexicographic objective, and take its optimal solution as the reference flow over time~$x^{\textup{ref}} = (f^{\textup{ref}}, s^{\textup{ref}}, d^{\textup{ref}}, u^{\textup{ref}})$ for the effectiveness criterion below.

We sample the adverse events following~\cite{Eshghali2023}. For each instance, impact rates for arc capacity reductions, supply reductions, storage capacity reductions, and demand increments are generated as i.i.d.\ samples from the uniform distribution on $[0,1]$. An impact rate $r$ defines a reduction of the corresponding original capacity, $\Delta^E \bar f_a(\theta) = -\,r\,\bar f_a(\theta)$ (and analogously for supply and storage), and an increment of demand, $\Delta^E \bar d_v(\theta) = r\,\bar d_v(\theta)$. For each instance, $10$ adverse events are sampled. Each adverse event onsets at $\theta^E$ drawn uniformly from $\{10,\ldots,20\}$ and ceases at a time drawn uniformly from $\{20,\ldots,\lfloor T/3\rfloor\}$.  Each event is assigned a weight $w^E$, sampled i.i.d.\ from the uniform distribution on $[0,1]$ and normalized so that $\sum_{E \in \mathcal{E}} w^E = 1$.
For the computational study, we realize the effectiveness criterion by bounds on the nominal flow value accumulated over time, and its associated costs. More precisely, deviations from the reference flow~$x^{\textup{ref}}$ are measured in the total unmet demand and the cost-weighted deviation of flow and storage, i.e.
\begin{align*}
    &H^{\textup{effect}}(x,x^{\textup{ref}}) = \Bigg(\; 
     \sum_{v \in V} \sum_{\theta=0}^{T} \big( u_v(\theta) - u^{\textup{ref}}_v(\theta) \big),\\
    &\qquad \sum_{a \in A} \sum_{\theta=0}^{T} c_a(\theta)\, \big( f_a(\theta) - f^{\textup{ref}}_a(\theta) \big) \\
    &\qquad + \sum_{v \in V} \sum_{\theta=0}^{T} c_v(\theta)\, \big( \textup{ex}_v(\theta) - \textup{ex}^{\textup{ref}}_v(\theta) \big)
    \;\Bigg),
\end{align*}
where $c_a(\theta)$ and $c_v(\theta)$ denote the transit and storage costs. Effectiveness is then enforced through the constraint
$H^{\textup{effect}}(x,x^{\textup{ref}}) \le B^{\textup{effect}}$ with a small tolerance $B^{\textup{effect}} = (B^{\textup{effect},\textup{unmet}}, B^{\textup{effect},\textup{cost}}) = (10^{-3},10^{-3})$. 

The per-unit cost of an expansive preparedness decision is sampled as $0.85\,T$ times an independent draw from the uniform distribution on $[1,3]$ for transit and storage capacity and on $[2,5]$ for supply; the per-unit cost of a reinforcement is sampled as $0.75\,T$ times an independent draw from the uniform distribution on $[1,3]$ for all three resource types, and the per-unit-and-per-time-step cost of a supporting preparedness decision is sampled from the uniform distribution on $[1,3]$ for transit and storage capacity and on $[2,5]$ for supply. The expansive and reinforcement rates are drawn independently for each arc and vertex, whereas the supporting rates are common to all arcs and vertices of the respective resource type. Consequently, per unit and for a single time step, supporting capacity is the least costly, followed by reinforcement and then expansive capacity. Finally, the preparedness budget is set to $T$ times an independent sample from the uniform distribution on $[0.7,1.5]$.
Table~\ref{tab:instance-characteristics} provides an overview over the characteristics of the instances.

\medskip

\begin{table}[htbp]
    \footnotesize
    \setlength{\tabcolsep}{4pt}
    \resizebox{\textwidth}{!}{%
    \begin{tabular}{@{}l r l c c c c c c c@{}}
        \toprule
        & &  & \multicolumn{4}{c}{Parameter ranges ($\text{med}\pm\text{s.d.}$)}
          & \multicolumn{3}{c}{Model size ($\text{med}\pm\text{s.d.}$)}\\
        \cmidrule(lr){4-7}\cmidrule(lr){8-10}
        $\lvert V\rvert$ & $\lvert A\rvert$ & $T$ 
        & Flow cap. & Supply & Storage cap. & Event len.
        & Cont.\ vars & Bin.\ vars & Constr.\\
        & & & & & & &  ($\times 10^{3}$) & & ($\times 10^{3}$)\\
        9  & 24 & 100--300  & $7.3 \pm 0.3$ & $0.81 \pm 0.04$ & $3.51 \pm 0.04$ & $44.4 \pm 20.1$ & $219 \pm 85$  & $3694 \pm 1439$ & $166 \pm 64$ \\
        12 & 34 & 100--300  & $7.5 \pm 0.3$ & $0.61 \pm 0.03$ & $3.50 \pm 0.02$ & $40.5 \pm 16.8$ & $300 \pm 116$ & $3710 \pm 1443$ & $223 \pm 86$ \\
        15 & 44 & 100--300  & $7.5 \pm 0.3$ & $0.49 \pm 0.03$ & $3.50 \pm 0.02$ & $46.3 \pm 16.0$ & $380 \pm 147$ & $3698 \pm 1442$ & $279 \pm 108$\\
        16 & 48 & 100--300 & $7.6 \pm 0.3$ & $0.45 \pm 0.02$ & $3.50 \pm 0.02$ & $41.0 \pm 20.6$ & $410 \pm 158$ & $3710 \pm 1436$ & $300 \pm 115$\\
        20 & 62 & 100--300  & $7.4 \pm 0.2$ & $0.36 \pm 0.02$ & $3.50 \pm 0.02$ & $45.5 \pm 16.5$ & $517 \pm 201$ & $3686 \pm 1441$ & $376 \pm 146$\\
        25 & 80 & 100--300  & $7.5 \pm 0.2$ & $0.28 \pm 0.02$ & $3.50 \pm 0.02$ & $47.6 \pm 18.7$ & $658 \pm 255$ & $3700 \pm 1441$ & $474 \pm 183$\\
        \bottomrule
    \end{tabular}%
    }
    \caption{Characteristics of the test instances, aggregated by  vertex count $\lvert V\rvert$ within each instance class. Every instance has $\lvert\mathcal{E}\rvert=10$ adverse events. For the parameter and model-size columns, entries report the median and    standard deviation ($\text{med}\pm\text{s.d.}$) taken over the per-instance means of the respective group; ``event length'' denotes     the duration $\ell^E$ (in time steps) of an adverse event. Continuous variables and constraints are reported in thousands ($\times 10^{3}$).}
    \label{tab:instance-characteristics}
\end{table}

\end{document}